\newtheorem{theorem}{Theorem}[section]
\newtheorem{lemma}[theorem]{Lemma}
\newtheorem{corollary}[theorem]{Corollary}
\theoremstyle{definition}
\newtheorem{definition}[theorem]{Definition}
\newtheorem{construction}[theorem]{Construction}
\theoremstyle{remark}
\numberwithin{equation}{section}
\newfont{\kh}{msbm10}
\begin{document}
\title[Induced Representations of Hilbert Modules]
{Induced Representations of Hilbert Modules over Locally C*-algebras
and the imprimitivity theorem}
\author{Kh. Karimi}
\address{Khadijeh Karimi,
\newline Department of Mathematics, Shahrood University, P.
O. Box 3619995161-316, Shahrood, Iran.}
\email{kh\underline{\space}karimi5005@yahoo.com}
\author{K. Sharifi}
\address{Kamran Sharifi,
\newline Current Address: Mathematisches Institut,
Fachbereich Mathematik und Informatik der Universit\"{a}t M\"{u}nster,
Einsteinstrasse 62, 48149 M\"{u}nster, Germany.
\newline Permanent Address: Department of Mathematics,
Shahrood University, P.
O. Box 3619995161-316, Shahrood, Iran.}
\email{sharifi.kamran@gmail.com}

%\thanks{This}

\subjclass[2010]{Primary 46L08; Secondary 46K10, 46L05}
\keywords{Hilbert modules, locally C*-algebras, module
maps,  Morita equivalence, induced representations}
\begin{abstract}

We study induced representations of Hilbert
modules over locally C*-algebras and their non-degeneracy.
We show that if $V$ and $W$ are Morita equivalent Hilbert modules over locally C*-algebras
$A$ and $B$, respectively, then there exists a bijective correspondence between
equivalence classes of non-degenerate representations of $V$ and $W$.
\end{abstract}
\maketitle

%%%%%%%%%%%%%%%%%%%%%%%%%%%%%%%%%%%%%%%%%%%%%%%%%%%%%%%%%%%%%%

%%%% Start %%%%%%
\section{Introduction}
Morita equivalence and induced representations of C*-algebras were first introduced
by Rieffel \cite{rif1,rif2}.  Two C*-algebras $A$ and $B$ are
Morita equivalent if there exists a full Hilbert $A$-module $E$ such that
$B$ is isomorphic to the C*-algebra $K_{A}(E)$ of all compact operators on $E$.
Some properties of C*-algebras that are preserved under Morita equivalence
were investigated in ~\cite{Ara,bee,rae,zet}. Indeed,
Rieffel defined induced representations of C*-algebras, that are now known as
Rieffel induced representations,
by using tensor products of Hilbert modules and established an equivalence between the categories
of non-degenerate representations of Morita equivalent C*-algebras.
Joita \cite{mar5,mar4}
defined the notions of Morita equivalence and induced representations
in the category of locally C*-algebras. Joita and Moslehian \cite{ms}
have recently introduced a notion of Morita equivalence in
the category of Hilbert C*-modules considered to obtain induced representations of Hilbert
modules over locally C*-algebras. This enables us to prove the imprimitivity theorem for
induced representations of Hilbert modules over locally C*-algebras.

Let us quickly recall the
definition of locally C*-algebras and Hilbert modules over them.
A locally C*-algebra is a complete Hausdorff complex
topological $*$-algebra $A$ whose
topology is determined by its continuous C*-seminorms in the
sense that the net $\{a_{i}\}_{i \in I}$ converges to $0$ if and only
if the net $\{p(a_i)\}_{i \in I}$ converges to $0$ for every
continuous C*-seminorm $p$ on $A$.
Such algebras appear in the study
of certain aspects of C*-algebras such as tangent algebras of C*-algebras, a domain of
closed $*$-derivations on C*-algebras, multipliers of
Pedersen's ideal, noncommutative analogues of classical Lie groups, and K-theory.
These algebras were first introduced by Inoue \cite{ino} as a generalization of
C*-algebras and studied more in
\cite{fra, phil1} with different names.
A (right) {\it pre-Hilbert module} over a locally C*-algebra
$A$ is a right $A$-module $E$
compatible with the complex algebra structure and equipped with an
$A$-valued inner product $\langle \cdot , \cdot \rangle
: E \times E \to A \,, \ (x,y) \mapsto \langle x,y
\rangle$, which is $A$-linear in the second variable $y$
and has the properties:
\[\langle x,y \rangle=\langle y,x \rangle ^{*}, \ {\rm and} \
 \langle x,x \rangle \geq 0 \ \ {\rm with} \
   {\rm equality} \ {\rm if} \ {\rm and} \ {\rm only} \
   {\rm if} \ x=0.\]

A pre-Hilbert $A$-module $E$ is a Hilbert
$A$-module if $E$ is complete with respect to the
topology determined by the family of seminorms $ \{
\overline{p}_E \}_{p \in S(A)}$,  where $\overline{p}_E
( \xi) = \sqrt{ p( \langle \xi, \xi \rangle)}$, $ \xi \in E$.
Hilbert modules over locally C*-algebras have
been studied systematically in the book \cite{mar1} and the
papers \cite{mar00, phil1, SHAMal}.

Joita and Moslehian \cite{ms}, and Skeide \cite{ske} defined
Morita equivalence for Hilbert
C*-modules with two different methods. In the recent sense of Joita and Moslehian,
two Hilbert modules $V$ and $W$ over C*-algebras $A$ and $B$, respectively,
are called Morita equivalent if $K_{A}(V)$ and $K_{B}(W)$ are strong
Morita equivalent as C*-algebras. We consider this definition,
which is weaker than Skeide's definition and also fitted to our paper.

In this paper, we first present some definitions and basic facts about locally
C*-algebras and Hilbert modules over them.  In~\cite{ske1}, Skeide proved
that if $E$ is a Hilbert module over a C*-algebra $A$, then every representation
of $A$ induces a representation of $E$. We use this fact to reformulate
the induced representations of Hilbert C*-modules and some of their properties
which have been
studied in \cite{Abbaspour}. These enable us to obtain the notion of
induced representations of Hilbert modules over locally C*-algebras.
We finally define the concept of Morita equivalence
for Hilbert modules over locally C*-algebras.
We prove that two full Hilbert modules over locally C*-algebras are Morita
equivalent if and only if their underlying locally C*-algebras
are strong Morita equivalent and then we give a module version of
the imprimitivity theorem. Indeed, we show
that for Morita equivalent
Hilbert modules $V$ and $W$ over locally C*-algebras
$A$ and $B$, respectively, there is a bijective correspondence between
equivalence classes of non-degenerate representations of $V$ and $W$.

%%%%%%%%%%%%%%%%%%%%%%%%%%%%%%%%%%%

\section{Preliminaries}
Let $A$ be a locally C*-algebra, $S(A)$  the set of all
continuous C*-seminorms on $A$ and $p\in S(A)$. We set
$N_{p}=\{a\in A:\ p(a)=0\}$, then $A_p=A/N_{p}$ is a C*-algebra in the norm induced
by $p$. For $p,q\in S(A)$ with $p\geq q$, the surjective
morphisms $\pi_{pq}: A_p \to A_q$ defined by $\pi_{pq}(a+N_{p})=a+N_{q}$
induce the inverse system $\{A_p; \pi_{pq}\}_{p,q\in S(A), \, p\geq q}$
of C*-algebras and $A = \varprojlim_{p}A_p$, i.e.,
the locally C*-algebra $A$ can be identified with $ \varprojlim_{p}A_p$.
The canonical map from $A$ onto $A_p$ is
denoted by $\pi_p$ and $a_{p}$ is reserved to denote $a+N_{p}$.
%We denote by $b(A)$ the set of all elements $a\in A$ such that
%$\|a\|_{\infty}:=\sup\{ p(a):\ p\in S(A) \}<\infty.$
%Then $b(A)$ is a C*-algebra with respect to the norm $\|.\|_{\infty}$
%and it is dense in $A$.
A morphism of locally C*-algebras is a continuous morphism of $*$-algebras.
An isomorphism of locally C*-algebras is a morphism of locally C*-algebras
which possesses an inverse morphism of locally C*-algebras.

A representation of a locally C*-algebra $A$ is a continuous $*$-morphism $\varphi: A \to B(H)$,
where $B(H)$ is the C*-algebra of all bounded linear maps on a Hilbert space $H$.
If $(\varphi,H)$ is a representation of $A$, then there is $p\in S(A)$ such that
$\|\varphi(a)\|\leq p(a)$, for all $a\in A$.
The representation $(\varphi_{p},H)$ of $A_{p}$, where $\varphi_{p} \circ  \pi_{p}=\varphi$
is called a representation of $A_{p}$ associated to $(\varphi,H)$.
We refer to \cite{fra, mar4} for basic facts and definitions about the
representation of locally C*-algebras.

Suppose $E$ is a Hilbert $A$-module and $\langle E,E\rangle$ is the
closure of linear span of $\{ \langle x,y\rangle: ~ x,y\in E \}$.
The Hilbert $A$-module $E$ is called {\it full} if $\langle E,E\rangle=A$.
One can always consider any Hilbert $A$-module as a full Hilbert module
over locally C*-algebra $\langle E,E\rangle$.
For each $p\in S(A), N_{p}^{E}=\{\xi\in E: \ \bar{p}_{E}(\xi)=0\}$
is a closed submodule of $E$ and $E_{p}=E/N_{p}^{E}$ is a
Hilbert $A_{p}$-module with the action $(\xi+N_{p}^{E})\pi_{p}(a)=\xi a+N_{p}^{E}$
and the inner product $\langle\xi+N_{p}^{E},\eta+N_{p}^{E}\rangle=\pi_{p}(\langle\xi,\eta\rangle).$
The canonical map from $E$ onto $E_{p}$ is denoted by $\sigma_{p}^{E}$ and
$\xi_{p}$ is reserved to denote $\sigma_{p}^{E}(\xi).$
For $p,q\in S(A)$ with $p\geq q$, the surjective morphisms
$\sigma_{pq}^{E}:E_{p} \to E_q$ defined by
$\sigma_{pq}^{E}(\sigma_{p}^{E}(\xi))=\sigma_{q}^{E}(\xi)$
induce the inverse system $\{E_{p};\  A_p;\  \sigma_{pq}^{E},\ \pi_{pq} \}_{p,q\in S(A), \, p\geq q}$
of Hilbert C*-modules in the following sense:
\begin{itemize}
\item $\sigma_{pq}^{E}(\xi_p a_p)= \sigma_{pq}^{E}(\xi_p)\pi_{pq}(a_p),\
 \xi_p\in E_p,\  a_p\in A_p,\  p,q\in S(A),\  p\geq q,$
\item $\langle\sigma_{pq}^{E}(\xi_p),\sigma_{pq}^{E}(\eta_p)\rangle=\pi_{pq}
(\langle\xi_p,\eta_p\rangle),\  \xi_p,\eta_p\in E_p,\  p,q\in S(A),\  p\geq q,$
\item $\sigma_{qr}^{E}\,  \circ \, \sigma_{pq}^{E}=\sigma_{pr}^{E}$\ \   if\ \
  $p,q,r\in S(A)$ and $p\geq q\geq r,$
\item $\sigma_{pp}^{E}(\xi_p)=\xi_p,\ \xi\in E,\ p\in S(A).$
\end{itemize}
In this case, $\varprojlim_{p}E_{p}$ is a
Hilbert $A$-module which can be identified with $E$.
Let $E$ and $F$ be Hilbert $A$-modules and $T:E\to F$ an $A$-module map.
The module map $T$ is called bounded if for each $p\in S(A)$
there is $k_p>0$ such that $\bar{p}_{F}(Tx)\leq k_{p}\ \bar{p}_{E}(x)$ for all $x\in E.$
The module map $T$ is called adjointable if there exists
an $A$-module map $T^{*}:F\to E$ with the property $\langle Tx,y\rangle=\langle x,T^{*}y\rangle$
for all $x\in E, y\in F.$ It is well-known that every adjointable map is bounded.
The set $L_{A}(E,F)$ of all bounded adjointable $A$-module maps from $E$
into $F$ becomes a locally convex space with the topology defined by the
family of seminorms $\{\tilde{p}\}_{p\in S(A)}$, where $\tilde{p}(T)=\|(\pi_{p})_{*}(T)\|_{ L_{A_p}(E_p,F_p)}$
and $(\pi_{p})_{*}:L_{A}(E,F)\to L_{A_p}(E_p,F_p)$ is defined by
$(\pi_p)_{*}(T)(\xi+N_{p}^{E})=T\xi+N_{p}^{F}$ for
all $T\in L_{A}(E,F), ~ \xi\in E$. For $p,q\in S(A)$ with $p\geq q$,
the morphisms $(\pi_{pq})_{*}:L_{A_p}(E_p,F_p)\to L_{A_q}(E_q,F_q)$
defined by $(\pi_{pq})_{*}(T_{p})(\sigma_{q}^{E}(\xi))=\sigma_{pq}^{F}(T_p(\sigma_{p}^{E}(\xi)))$ induce
the inverse system \[\{ L_{A_p}(E_p,F_p); (\pi_{pq})_{*}\}_{p,q\in S(A), \, p\geq q}\]
of Banach spaces such that $\varprojlim_{p} L_{A_p}(E_p,F_p)$ can be identified to $L_{A}(E,F)$.
In particular, topologizing, $L_{A}(E,E)$ becomes a locally
C*-algebra which is abbreviated by $L_{A}(E)$.
The set of all compact operators $K_{A}(E)$ on $E$ is defined as the closed
linear subspace of $L_A(E)$ spanned by
$\{\theta_{x,y}: \ \theta_{x,y}(\xi)=x\langle y,\xi\rangle ~ {\rm for~ all}~ x,y,\xi \in E\}$.
 This is a locally C*-subalgebra and a two-sided ideal of $L_{A}(E)$; moreover,
$K_{A}(E)$ can be identified to $\varprojlim_{p}\ K_{A_p}(E_p)$.

Let $V$ and $W$ be Hilbert modules over locally C*-algebras $A$ and $B$, respectively,
and $\Psi:A\to L_{B}(W)$ a continuous $*$-morphism. We can regard $W$ as a
left $A$-module by $(a,y)\to \Psi(a)y, ~a\in A, ~y\in W$. The right $B$-module
$V\otimes_{A}W$ is a pre-Hilbert module
with the inner product given by
$\langle x\otimes y, z\otimes t\rangle=\langle y,\Psi(\langle x,z\rangle)t\rangle$.
We denote by $V\otimes_{\Psi}W$ the completion of $V\otimes_{A}W$,
cf. \cite{mar3} for more detailed information.

\section{Induced representations of Hilbert modules}
In this section, we first study induced representations of Hilbert C*-modules and then
we reformulate them in the context of Hilbert modules over locally C*-algebras.

Let $H$ and $K$ be Hilbert spaces. Then the space $B(H,K)$ of all bounded operators from $H$ into $K$
can be considered as a Hilbert $B(H)$-module with the module action
$(T,S)\to TS$, $T\in B(H,K)$ and $S\in B(H)$ and the inner product defined by
$\langle T,S\rangle=T^{*}S$, $T,S\in B(H,K)$. Murphy \cite{mur} showed that any
Hilbert C*-module can be represented as a submodule of the concrete Hilbert module $B(H,K)$
for some Hilbert spaces $H$ and $K$.
This allows us to extend the notion of a representation from the
context of C*-algebras to the context of Hilbert C*-modules.
Let $V$ and $W$ be two Hilbert modules over C*-algebras $A$ and $B$, respectively,
and $\varphi:A \to B$ be a morphism of C*-algebras. A map $\Phi: V \to W$ is said to a
$\varphi$-morphism if $\langle\Phi(x),\Phi(y)\rangle=\varphi(\langle x,y\rangle)$
for all $x,y\in V$. A $\varphi$-morphism $\Phi:V\to B(H,K)$,
where $\varphi:A \to B(H)$ is a representation of $A$ is called a representation of $V$.
When $\Phi$ is a representation of $V$, we assume that an associated
representation of $A$ is denoted by
the same lowercase letter $\varphi$, so we will not explicitly mention $\varphi$.
Let $\Phi:V \to B(H,K)$ be a representation of a Hilbert $A$-module $V$.
We say $\Phi$ is a non-degenerate representation if
$\overline{\Phi(V)(H)}=K$ and $\overline{\Phi(V)^{*}(K)}=H$.
Two representations $\Phi_{i}:V\to B(H_{i},K_{i})$ of $V$, $i=1,2$
are said to be unitarily equivalent if there are
unitary operators $U_{1}:H_{1}\to H_{2}$ and $U_{2}:K_{1}\to K_{2}$,
such that $U_{2}\Phi_{1}(v)=\Phi_{2}(v)U_{1}$ for all $v\in V$.
Representations of Hilbert modules have been investigated in ~\cite{Abbaspour,aram,ske1}.

\begin{lemma} \label{Rep1}
Let $V$ be a full Hilbert $A$-module and
$\Phi_{1}:V\to B(H_{1},K_{1})$ and $\Phi_{2}:V\to B(H_{2},K_{2})$
two non-degenerate representations of $V$. If $\Phi_{1}$
and $\Phi_{2}$ are unitarily equivalent, then $\varphi_{1}$
and $\varphi_{2}$ are unitarily equivalent.
\end{lemma}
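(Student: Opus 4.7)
The plan is to show that the same unitary $U_{1}$ appearing in the definition of unitary equivalence of the module representations already implements the unitary equivalence of the associated representations $\varphi_{1}$ and $\varphi_{2}$ of $A$. So no new operator needs to be constructed; one only has to verify that $U_{1}\varphi_{1}(a)=\varphi_{2}(a)U_{1}$ for every $a\in A$.

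First I would extract from the hypothesis the intertwining relation $U_{2}\Phi_{1}(v)=\Phi_{2}(v)U_{1}$ for all $v\in V$, and take adjoints (using that $U_{1}$ and $U_{2}$ are unitary) to obtain the companion identity $U_{1}\Phi_{1}(v)^{*}=\Phi_{2}(v)^{*}U_{2}$. The next step is to combine these two identities with the defining $\varphi_{i}$-morphism property $\Phi_{i}(x)^{*}\Phi_{i}(y)=\varphi_{i}(\langle x,y\rangle)$, which gives for all $x,y\in V$
\[
U_{1}\varphi_{1}(\langle x,y\rangle)=U_{1}\Phi_{1}(x)^{*}\Phi_{1}(y)=\Phi_{2}(x)^{*}U_{2}\Phi_{1}(y)=\Phi_{2}(x)^{*}\Phi_{2}(y)U_{1}=\varphi_{2}(\langle x,y\rangle)U_{1}.
\]
Thus $U_{1}$ intertwines $\varphi_{1}$ and $\varphi_{2}$ on the set $\{\langle x,y\rangle:x,y\in V\}$, and by linearity on its linear span.

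Finally I would invoke the fullness hypothesis, $\langle V,V\rangle=A$, which by definition means that the linear span of the inner products is dense in $A$. Since the C*-algebra representations $\varphi_{1}$ and $\varphi_{2}$ are continuous and since left and right multiplication by the bounded operator $U_{1}$ is continuous in the norm topology of $B(H_{1},H_{2})$, the intertwining identity extends from the dense subset to all of $A$. This exhibits $U_{1}$ as a unitary implementing the equivalence of $\varphi_{1}$ and $\varphi_{2}$.

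I do not expect any serious obstacle: the $\varphi$-morphism property converts an inner product into an operator product in exactly the way needed, and fullness takes care of extending the equality from $\langle V,V\rangle$ to $A$. The role of non-degeneracy in the statement is essentially cosmetic here (it will matter for the converse and for later imprimitivity-type results), so I would not rely on it in this direction of the argument.
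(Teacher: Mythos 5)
Your proposal is correct and follows essentially the same route as the paper: the paper's proof likewise uses the single identity $U_{1}\varphi_{1}(\langle x,y\rangle)=U_{1}\Phi_{1}(x)^{*}\Phi_{1}(y)=\Phi_{2}(x)^{*}\Phi_{2}(y)U_{1}=\varphi_{2}(\langle x,y\rangle)U_{1}$ and then invokes fullness to pass from $\langle V,V\rangle$ to all of $A$. Your version merely makes explicit the adjoint step and the density-plus-continuity extension, which the paper leaves implicit.
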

\begin{proof}
Let $U_1:H_1 \to H_2$ and $U_2:K_1\to K_2$ be unitary operators and
$U_2\Phi_1(x)=\Phi_2(x)U_1$ for all $x\in V$. Then we have
\[U_{1}\varphi_1(\langle x,y \rangle)h=U_1\Phi_{1}(x)^{*}\Phi_{1}(y)h=
\Phi_{2}(x)^{*}\Phi_{2}(y)U_1h=\varphi_2(\langle x,y \rangle)U_{1}h,\]
for every $x,y\in V$ and $h\in H_1$.
Since $V$ is full, we conclude that $U_{1}\varphi_1(a)h=\varphi_2(a)U_{1}h$ for every
$a \in A$ and $h\in H_1$, and consequently, $\varphi_1$ and $\varphi_2$ are unitarily equivalent.
\end{proof}
Skeide ~\cite{ske1} recovered the result of
Murphy by embedding  every Hilbert $A$-module
$E$ into a matrix C*-algebra as a lower submodule. He proved that every
representation of $B$ induces a representation of $E$. We describe his
induced representation as follows.% (see also \cite{Abbaspour})
%%%%%%%%%%%%%%%%%%%%%%%%%%%%%%%
\begin{construction}\label{const1}
Let $B$ be a C*-algebra and $E$ a Hilbert $B$-module
and $\varphi: B \to B(H)$ a $*$-representation
of $B$. Define a sesquilinear form $\langle.,.\rangle$ on the vector space $E\otimes_{alg}H$ by
$\langle x\otimes h,y\otimes k\rangle=\langle h,\varphi(\langle x,y\rangle)k\rangle_{H},$
where $\langle.,.\rangle_{H}$ denotes the inner product on the Hilbert space $H$.
By \cite[Proposition 3.8]{ske1}, the sesquilinear form is positive and so $E\otimes_{alg}H$
is a semi-Hilbert space. Then $(E\otimes_{alg}H)/N_{\varphi}$
is a pre-Hilbert space with the inner product defined by
\[\langle x\otimes h+N_{\varphi} \ ,\  y\otimes k+N_{\varphi}\rangle=\langle x\otimes h,y\otimes k\rangle,\]
where $N_{\varphi}$ is the vector subspace of $E\otimes_{alg}H$
generated by $\{x\otimes h\in E\otimes_{alg}H: \langle x\otimes h,x\otimes h\rangle=0 \}$.
The completion of $(E\otimes_{alg}H)/N_{\varphi}$ with respect to the above inner product
is denoted by $_{E}H$.
We identify the elements $x\otimes h$ with the equivalence classes $x\otimes h+N_{\varphi}\in$  $_{E}H$.
Suppose $x\in E$ and $L_{x}h=x\otimes h$ then
$\| L_{x}h\|^2= \langle h,\varphi(\langle x,x\rangle)h\rangle\leq \| h \|^2\|x\|^2$, i.e.
$L_{x}\in B(H,_{E}H)$.
We define $\eta_{\varphi}: E \to B(H,_{E}H)$ by $\eta_{\varphi}(x)=L_{x}$. Then
for $x,x'\in E$,  $h,h'\in H$ and $b\in B$ we have
$\langle \eta_{\varphi}(x),\eta_{\varphi}(x')\rangle=\varphi(\langle x,x'\rangle)$ and
$\eta_{\varphi}(xb)=\eta_{\varphi}(x)\varphi(b)$, and so
$ \eta_{\varphi}$ is a representation of $E$.
\end{construction}
%%%%%%%%%%%%%%%%%%%%%
\begin{lemma}\label{Rep2}
Let $\varphi_1:B \to B(H_1)$ and $\varphi_2:B \to B(H_2)$ be two
non-degenerate representations of $B$. If $\varphi_1$ and $\varphi_2$
are unitarily equivalent, then $\eta_{\varphi_{1}}$ and $\eta_{\varphi_{2}}$
are unitarily equivalent.
\end{lemma}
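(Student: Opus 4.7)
The plan is to build the required intertwining unitary $U_{2}:{}_{E}H_{1}\to{}_{E}H_{2}$ directly from the given intertwiner $U:H_{1}\to H_{2}$ between $\varphi_{1}$ and $\varphi_{2}$, by transporting elementary tensors through $U$ in the second factor. Set $U_{1}:=U$, and define $U_{2}$ on the algebraic tensor product by
\[
U_{2}\Bigl(\sum_{i} x_{i}\otimes h_{i}\Bigr)=\sum_{i} x_{i}\otimes Uh_{i},\qquad x_{i}\in E,\ h_{i}\in H_{1},
\]
and then check that $U_{2}$ descends to the quotient and extends to the completion. The advertised intertwining $U_{2}\eta_{\varphi_{1}}(x)=\eta_{\varphi_{2}}(x)U_{1}$ is then tautological on elementary tensors: $U_{2}\eta_{\varphi_{1}}(x)h = U_{2}(x\otimes h)=x\otimes Uh=\eta_{\varphi_{2}}(x)Uh$.

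The core computation, which I would do first, is to verify that $U_{2}$ preserves the sesquilinear form defined in Construction \ref{const1}. Using the intertwining identity $U\varphi_{1}(b)=\varphi_{2}(b)U$ with $b=\langle x,y\rangle$ and the fact that $U$ is isometric,
\[
\langle U_{2}(x\otimes h),U_{2}(y\otimes k)\rangle=\langle Uh,\varphi_{2}(\langle x,y\rangle)Uk\rangle_{H_{2}}=\langle Uh,U\varphi_{1}(\langle x,y\rangle)k\rangle_{H_{2}}=\langle x\otimes h,y\otimes k\rangle.
\]
Polarising and summing, this gives the form-preservation on all of $E\otimes_{\mathrm{alg}}H_{1}$. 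In particular $U_{2}$ sends $N_{\varphi_{1}}$ into $N_{\varphi_{2}}$, hence descends to a well-defined linear map between the quotients that is isometric for the pre-Hilbert inner products.

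Next I would extend $U_{2}$ by continuity to a linear isometry ${}_{E}H_{1}\to {}_{E}H_{2}$ and show it is a unitary. Since $U:H_{1}\to H_{2}$ is surjective, the range of $U_{2}$ contains every elementary tensor $x\otimes k$ with $k\in H_{2}$; these span a dense subspace of ${}_{E}H_{2}$, and an isometry with dense range between Hilbert spaces is unitary. Finally, the intertwining identity for $\eta_{\varphi_{1}},\eta_{\varphi_{2}}$ follows from the formula on elementary tensors displayed above, extended by linearity and continuity.

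The only mildly delicate point is making sure the descent to the quotient and the extension to the completion are legitimate, i.e.\ that $U_{2}$ really does send $N_{\varphi_{1}}$ into $N_{\varphi_{2}}$; this is forced by the form-preserving calculation rather than needing a separate argument. Non-degeneracy of $\varphi_{1},\varphi_{2}$ is not used in the construction itself, but it is consistent with the setup of Lemma \ref{Rep1} and ensures that the induced representations $\eta_{\varphi_{i}}$ fit into the non-degenerate framework of the paper.
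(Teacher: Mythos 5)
Your proof is correct and follows essentially the same route as the paper: the paper also takes $\mathrm{id}_{E}\otimes U$ on $E\otimes_{\mathrm{alg}}H_{1}$, extends it to a unitary between ${}_{E}H_{1}$ and ${}_{E}H_{2}$, and reads off the intertwining relation on elementary tensors. You merely spell out the form-preservation, the descent past $N_{\varphi_{1}}$, and the density argument for surjectivity, which the paper leaves implicit.
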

\begin{proof}
Suppose $U:H_1\to H_2$ is a unitary operator such that $U\varphi_1(b)=\varphi_2(b)U$ for all $b\in B$.
Then $id_{E}\otimes U: E\otimes_{alg} H_1 \to E \otimes_{alg} H_2$ given
by $x\otimes h_1 \mapsto x\otimes h_2$ can be extended to a unitary operator $V$ from
$_{E}H_1$ onto $_{E}H_2$ and $V\eta_{\varphi_{1}}(x)=\eta_{\varphi_2}(x)U$ for all $x\in E$.
Hence, $\eta_{\varphi_{1}}$ and $\eta_{\varphi_{2}}$ are unitarily equivalent.
\end{proof}
The above argument enables us to extend the Rieffel induced representations from
the case of C*-algebras to the context of Hilbert C*-modules.
For this, let $V$ and $W$ be two full Hilbert modules
over C*-algebras $A$ and $B$, respectively.
Let $E$ be a Hilbert $B$-module and $A$ acts as adjointable
operators on the Hilbert C*-module $E$, and $\Phi: W\to B(H,K)$
is a non-degenerate representation of $W$. Using \cite[Proposition 2.66]{rae}, the formula
$_{E}^{A}\varphi(x\otimes h)=(a.x)\otimes h$
extends to obtain a (Rieffel induced) representation of $A$
as bounded operators on Hilbert space $_{E}H$.
In view of Construction \ref{const1}, the representation $_{E}^{A}\varphi:A \to B(_{E}H)$
of the C*-algebra $A$ obtains the representation
$\eta_{_{E}^{A}\varphi}:V \to B(_{E}H,\  _{V}(_{E}H))$ of the Hilbert $A$-module $V$.
The constructed representation $\eta_{_{E}^{A}\varphi}$ is called
the {\it Rieffel induced representation}
from $W$ to $V$ via $E$ and denoted by $_{E}^{V}\Phi$.
The following result can be found in \cite[Proposition 3.3]{Abbaspour}
that we derive  from Lemmas \ref{Rep1} and \ref{Rep2}.
Our argument seems to be shorter.

\begin{lemma}\label{Rep3}
Let $W$ be a full Hilbert $B$-module and $\Phi_{1}:W\to B(H_{1},K_{1})$
and $\Phi_{2}:W\to B(H_{2},K_{2})$ two non-degenerate representations of $W$.
If $\Phi_{1}$ and $\Phi_{2}$ are unitarily equivalent,
then $_{E}^{V}\Phi_{1}$ and $_{E}^{V}\Phi_{2}$ are unitarily equivalent.
\end{lemma}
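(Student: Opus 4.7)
The plan is to chain the two preceding lemmas together, inserting one routine step to the effect that Rieffel induction of C*-algebra representations preserves unitary equivalence.

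First, I would apply Lemma \ref{Rep1} to the full Hilbert $B$-module $W$ and the two unitarily equivalent non-degenerate representations $\Phi_{1}$ and $\Phi_{2}$. This produces a unitary $U:H_{1}\to H_{2}$ intertwining the associated representations $\varphi_{1}:B\to B(H_{1})$ and $\varphi_{2}:B\to B(H_{2})$ of the C*-algebra $B$.

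Second, I would verify that the Rieffel induction operation $\varphi\mapsto {}^{A}_{E}\varphi$ carries unitarily equivalent representations of $B$ to unitarily equivalent representations of $A$. The map $E\otimes_{alg}H_{1}\to E\otimes_{alg}H_{2}$ given by $x\otimes h\mapsto x\otimes Uh$ preserves the Rieffel pre-inner product $\langle x\otimes h,y\otimes k\rangle=\langle h,\varphi_{i}(\langle x,y\rangle)k\rangle$, because $U$ is unitary and intertwines $\varphi_{1}$ with $\varphi_{2}$. Hence it sends the null space associated with $\varphi_{1}$ into the null space associated with $\varphi_{2}$, descends to the quotients, and extends to a unitary $\widetilde{U}:{}_{E}H_{1}\to {}_{E}H_{2}$. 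From the defining formula ${}^{A}_{E}\varphi_{i}(a)(x\otimes h)=(a\cdot x)\otimes h$ one reads off $\widetilde{U}\,{}^{A}_{E}\varphi_{1}(a)={}^{A}_{E}\varphi_{2}(a)\,\widetilde{U}$ for every $a\in A$, so ${}^{A}_{E}\varphi_{1}$ and ${}^{A}_{E}\varphi_{2}$ are unitarily equivalent.

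Finally, I would apply Lemma \ref{Rep2} to these two unitarily equivalent representations of $A$, obtaining that $\eta_{{}^{A}_{E}\varphi_{1}}$ and $\eta_{{}^{A}_{E}\varphi_{2}}$ are unitarily equivalent as representations of the Hilbert $A$-module $V$. By the very definition introduced just before the lemma, these are exactly ${}^{V}_{E}\Phi_{1}$ and ${}^{V}_{E}\Phi_{2}$, which completes the argument. The only non-bookkeeping step is the second one, and even there the main obstacle is essentially clerical: one must confirm that the algebraic tensor map $\mathrm{id}_{E}\otimes U$ respects the two Rieffel null spaces so that it descends to a genuine unitary between the completed Hilbert spaces ${}_{E}H_{1}$ and ${}_{E}H_{2}$.
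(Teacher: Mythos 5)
Your proposal is correct and follows exactly the route the paper intends: the authors state that Lemma \ref{Rep3} is derived from Lemmas \ref{Rep1} and \ref{Rep2}, and your argument chains those two lemmas with the standard intermediate fact that Rieffel induction $\varphi\mapsto{}^{A}_{E}\varphi$ preserves unitary equivalence of representations of the underlying C*-algebras. Your explicit construction of the unitary $\widetilde{U}$ via $x\otimes h\mapsto x\otimes Uh$ merely fills in a step the paper leaves implicit, so there is nothing to correct.
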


\begin{corollary}\label{Rep4}
If $\Phi:W\to B(H,K)$  and
$\oplus_{i \in I} \Phi_{i}: W \to B(\oplus_{i \in I} H_{i},\oplus_{i \in I} K_{i})$
are unitarily equivalent,
then $_{E}^{V} \Phi$ and $\oplus_{i \in I} \ _{E}^{V} \Phi_{i}$ are unitary equivalent.
\end{corollary}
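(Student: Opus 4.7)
The plan is to split the claim into two steps: first use Lemma~\ref{Rep3} to replace $\Phi$ by $\oplus_i\Phi_i$ inside the induction, and then verify that the induction construction itself commutes with direct sums of representations. By Lemma~\ref{Rep3} applied to the unitarily equivalent representations $\Phi$ and $\oplus_{i\in I}\Phi_i$, the induced representations ${}_E^V\Phi$ and ${}_E^V(\oplus_{i\in I}\Phi_i)$ are already unitarily equivalent. Hence, by transitivity, it suffices to exhibit a unitary equivalence ${}_E^V(\oplus_{i\in I}\Phi_i)\sim \oplus_{i\in I}\,{}_E^V\Phi_i$.

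For this, let $\varphi_i:B\to B(H_i)$ denote the representation of $B$ associated with $\Phi_i$, so that $\oplus_i\varphi_i:B\to B(\oplus_i H_i)$ is the representation of $B$ associated with $\oplus_i\Phi_i$. I would construct the required unitaries in two layers corresponding to the two applications of the interior tensor product in Construction~\ref{const1}. At the first layer, set
\[
U_1\bigl(x\otimes(h_i)_{i\in I}\bigr)=(x\otimes h_i)_{i\in I}
\]
on elementary tensors in $E\otimes_{alg}(\oplus_i H_i)$. The identity
\[
\bigl\langle (h_i),(\oplus_i\varphi_i)(\langle x,y\rangle)(k_i)\bigr\rangle=\sum_i\langle h_i,\varphi_i(\langle x,y\rangle)k_i\rangle
\]
shows that $U_1$ is inner-product preserving on simple tensors, so it descends through the relevant null spaces and extends to a unitary $U_1:{}_E(\oplus_i H_i)\to \oplus_i\,{}_E H_i$; from the defining formula $(a\cdot x)\otimes h$ of the induced $A$-action, this $U_1$ intertwines ${}_E^A(\oplus_i\varphi_i)$ with $\oplus_i\,{}_E^A\varphi_i$. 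Applying the same recipe at the second layer, now with $V$ in place of $E$ and with the induced $A$-representations just identified, produces a unitary $U_2:{}_V({}_E(\oplus_i H_i))\to \oplus_i\,{}_V({}_E H_i)$.

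The intertwining relation $U_2\,{}_E^V(\oplus_i\Phi_i)(v)=\bigl(\oplus_i\,{}_E^V\Phi_i\bigr)(v)\,U_1$ can then be checked on the dense set of simple tensors $x\otimes(h_i)_i$, where, using $\eta_\varphi(v)(\xi)=v\otimes\xi$, both sides equal $(v\otimes(x\otimes h_i))_{i\in I}$; combined with the first reduction this yields the corollary. The only substantive step is the construction of $U_1$, which expresses the standard fact that the interior tensor product distributes canonically and unitarily over orthogonal direct sums in the Hilbert space argument; once $U_1$ is in place, $U_2$ and the intertwining relation are routine verifications, and the main obstacle reduces to careful bookkeeping about well-definedness on the two levels of null spaces.
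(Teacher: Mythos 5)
Your proof is correct. The paper states Corollary~\ref{Rep4} without any proof, immediately after Lemma~\ref{Rep3}, and your two-step argument is exactly the one the corollary implicitly relies on: Lemma~\ref{Rep3} gives ${}_{E}^{V}\Phi \sim {}_{E}^{V}(\oplus_{i}\Phi_{i})$, and you correctly recognize that this alone does not finish the job --- one still needs the unitary identification ${}_{E}^{V}(\oplus_{i}\Phi_{i}) \sim \oplus_{i}\,{}_{E}^{V}\Phi_{i}$, which you supply by showing that each layer of the two-stage tensor construction distributes over the orthogonal direct sum $\oplus_{i}H_{i}$. The only cosmetic point is that Lemma~\ref{Rep3} requires $W$ full and the representations non-degenerate; these hypotheses are implicit in the corollary (and non-degeneracy of $\oplus_{i}\Phi_{i}$ follows from its unitary equivalence with $\Phi$), so nothing is lost.
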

%%%%%%%%%%%%%%%%%%%%%%%%%%%%%%%%%Locally C*-algebras%%%%%%%%%%%%%%%%%%%%%%%%%%%%

Now, we reformulate representations of the Hilbert module
from the case of C*-algebras to the case of locally C*-algebras.
Let $V$ and $W$ be two Hilbert modules over locally C*-algebras $A$ and $B$,
respectively, and $\varphi:A \to B$ a morphism of locally C*-algebras.
A map $\Phi: V \to W$ is said to be a $\varphi$-morphism if
$\langle \Phi(x), \Phi(y) \rangle=\varphi(\langle x,y\rangle)$, for all $x,y\in V$.
A $\varphi$-morphism $\Phi:V\to B(H,K)$, where $\varphi:A \to B(H)$ is a
representation of $A$, is called a representation of $V$.
We can define non-degenerate representations and unitarily
equivalent representations for Hilbert modules over
locally C*-algebras like a Hilbert C*-modules case.

Suppose $A$ is a locally C*-algebra, $V$ is a Hilbert $A$-module
and $\varphi:A \to B(H)$ is a representation of $A$ on some Hilbert space $H$.
Suppose $p \in S(A)$ and $\varphi_{p}$ is a representation of $A_{p}$ associated to $\varphi$; then
there exist a Hilbert space $K$ and a representation $\Phi_{p}:V_{p}\to B(H,K)$
which is a $\varphi_{p}$-morphism. For details we refer to the proof of \cite[Theorem 3.1]{mur}.
It is easy to see that the map
$\Phi:V \to B(H,K)$, $\Phi(v)=\Phi_{p}(\sigma_{p}^{V}(v))$ is a
$\varphi$-morphism, i.e., it is a representation of $V$.

\begin{lemma}\label{Rep5}
Let $V$ be a Hilbert module over locally C*-algebra $A$ and $\Phi:V \to B(H,K)$
 a representation of $V$. If $p \in S(A)$ and $\varphi_{p}$ is a representation of $A_{p}$
associated to $\varphi$, then the map $\Phi_{p}:V_{p}\to B(H,K)$,
$\Phi_{p}(\sigma_{p}^{V}(v)) = \Phi(v)$ is a $\varphi_{p}$-morphism.
Specifically, $\Phi_{p}$ is a representation of $V_{p}$ and $\Phi$
is non-degenerate if and only if $\Phi_{p}$ is. In this case,
we say that $\Phi_{p}$ is a representation of $V_{p}$ associated to $\Phi$.
\end{lemma}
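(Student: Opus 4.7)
The plan is to produce $\Phi_p$ by factoring $\Phi$ through the quotient map $\sigma_p^V:V\to V_p$, and then check each of the asserted properties (well-defined linear map, $\varphi_p$-morphism, and non-degeneracy equivalence) in turn. The only nontrivial point is well-definedness, for which I would use the continuity estimate $\|\varphi(a)\|\le p(a)$ that is guaranteed by the chosen associated representation $\varphi_p$.

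First, I would show that $\Phi$ vanishes on $N_p^V$. Indeed, for $v\in N_p^V$ we have $\bar p_V(v)^2 = p(\langle v,v\rangle)=0$, and since $\Phi$ is a $\varphi$-morphism we compute
\[
\|\Phi(v)\|^2 = \|\Phi(v)^*\Phi(v)\| = \|\varphi(\langle v,v\rangle)\| \le p(\langle v,v\rangle) = 0,
\]
using the continuity bound for $\varphi$. Combined with linearity this shows that $\Phi(v_1)=\Phi(v_2)$ whenever $\sigma_p^V(v_1)=\sigma_p^V(v_2)$, so the prescription $\Phi_p(\sigma_p^V(v))=\Phi(v)$ really defines a linear map $\Phi_p:V_p\to B(H,K)$.

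Next, I would verify the $\varphi_p$-morphism identity by a direct computation using the inner product on $V_p$, namely $\langle \sigma_p^V(x),\sigma_p^V(y)\rangle=\pi_p(\langle x,y\rangle)$, together with the factorization $\varphi=\varphi_p\circ\pi_p$:
\[
\langle \Phi_p(\sigma_p^V(x)),\Phi_p(\sigma_p^V(y))\rangle
= \langle \Phi(x),\Phi(y)\rangle
= \varphi(\langle x,y\rangle)
= \varphi_p(\langle \sigma_p^V(x),\sigma_p^V(y)\rangle).
\]
Since $\varphi_p$ is a representation of $A_p$, this says precisely that $\Phi_p$ is a representation of $V_p$.

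Finally, for the non-degeneracy equivalence I would note that $\sigma_p^V:V\to V_p$ is surjective, so $\Phi_p(V_p)=\Phi(V)$ as subsets of $B(H,K)$, and consequently $\Phi_p(V_p)^*=\Phi(V)^*$ as subsets of $B(K,H)$. The density conditions $\overline{\Phi(V)H}=K$ and $\overline{\Phi(V)^*K}=H$ that define non-degeneracy of $\Phi$ therefore coincide term by term with those defining non-degeneracy of $\Phi_p$, giving the equivalence. The only mildly subtle step in the whole argument is the well-definedness in the first paragraph, which is where the continuity of the representation (the bound $\|\varphi(a)\|\le p(a)$ for the chosen $p$) is essential; everything else is a routine unravelling of definitions.
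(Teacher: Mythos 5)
Your proposal is correct and follows essentially the same route as the paper's proof: well-definedness via the bound $\|\varphi(a)\|\le p(a)$ applied to $\varphi(\langle v,v\rangle)$, the same direct computation for the $\varphi_p$-morphism identity using $\varphi=\varphi_p\circ\pi_p$, and the observation that $\Phi_p(V_p)=\Phi(V)$ for the non-degeneracy equivalence (which the paper dispatches with ``by definition of $\Phi_p$''). No gaps.
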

\begin{proof} Let $v,v' \in V$ and $ \overline{p}_{V}(v-v')=0$.
Since $\| \varphi(a) \| \leq p(a)$ for all $a \in A$, we have
$\langle \Phi(v-v'),\Phi(v-v')\rangle=\varphi(\langle v-v',v-v' \rangle)=0$,
which shows $\Phi_{p}$ is well-defined. We also have
\begin{eqnarray*}
\langle\Phi_{p}(\sigma_{p}^{V}(v)),\Phi_{p}(\sigma_{p}^{V}(v^{'}))\rangle=
\langle\Phi(v),\Phi(v^{'})\rangle=\varphi(\langle v,v^{'}\rangle)
&=&\varphi_{p} \circ \pi_{p}(\langle v,v^{'}\rangle)\\
&=&\varphi_{p}(\langle\sigma_{p}^{V}(v),\sigma_{p}^{V}(v^{'})\rangle).
\end{eqnarray*}
Then, by definition of $\Phi_{p}$, the representation $\Phi$ is non-degenerate
if and only if $\Phi_{p}$ is non-degenerate.
\end{proof}
%%%%%%%%%%%%%%%%%%%%%%%%%%%%%%%%%%%%%%%%%%%%%%%%
Let $V$ and $W$ be two full Hilbert modules over locally C*-algebras $A$ and $B$,
respectively. Let $E$ be a Hilbert $B$-module, $\Psi:A\to L_{B}(E)$  a
non-degenerate continuous $*$-morphism and $\Phi:W\to B(H,K)$  a non-degenerate
representation of $W$. We construct a non-degenerate representation from $W$ to
$V$ via $E$ as follows.
\begin{construction}\label{const2}
We define a sesquilinear form $\langle.,.\rangle$
on the vector space $E\otimes_{alg}H$ by
$\langle x\otimes h,y\otimes k\rangle=\langle h,\varphi(\langle x,y\rangle)k\rangle_{H}$ and
make the Hilbert space $_{E}H$ as in Construction \ref{const1}.
The map $_{E}^{A}\varphi:A\to B(_{E}H)$ defined by
\[_{E}^{A}\varphi(a)(x\otimes h)=\Psi(a)x\otimes h,~~ a\in A,~ x\in E,~ h\in H,\]
is a representation of $A$. The representation $(_{E}H, \, _{E}^{A}\varphi)$ is
called the {\it Rieffel induced representation} from $B$ to $A$ via $E$, cf.~\cite{mar4}.
Since $A$ acts as an adjointable operator on Hilbert $B$-module $E$,
we can construct interior tensor product $V \otimes_{\Psi} E$
as a Hilbert $B$-module. Hence, we find the Hilbert spaces $_{E}H$ and $_{V\otimes_{\Psi} E}H$.
Let $v\in V$; then the map $E\times H \to \, _{V\otimes_{\Psi} E}H$,
$(x,h) \mapsto v\otimes x\otimes h$ is a bilinear form and so there is a
unique linear transformation $_{E}\Phi(v):E\otimes_{alg}H \to \, _{V\otimes_{\Psi} E}H$
which can be extended to a bounded linear operator
$_{E}^{V}\Phi(v)$ from $_{E}H$ to $_{V\otimes_{\Psi} E}H$. To see this,
suppose $q \in S(B)$, $x\in E$, $h\in H$ and $(\varphi_{q},H)$ is a representation of $B_{q}$
associated to $(\varphi,H)$. We have

\begin{eqnarray*}
\langle \, _{E}\Phi(v)(x\otimes h) &,& _{E}\Phi(v)(x\otimes h) \rangle= \langle v\otimes x\otimes h,v\otimes x\otimes h \rangle
\\&=& \langle h,\varphi(\langle v\otimes x,v\otimes x\rangle)h \rangle_{H}
\\&=&  \langle h,\varphi(\langle x,\Psi(\langle v,v\rangle)x\rangle)h \rangle_{H}
\\&=&  \langle h,\varphi_{q} \circ \pi_{q} (\langle\Psi(\langle v,v\rangle)^{1/2}x,\Psi(\langle v,v\rangle)^{1/2}x\rangle )h \rangle_{H}
\\&=&  \langle h,\varphi_{q} (\langle\sigma_{q}(\Psi(\langle v,v\rangle)^{1/2}x),\sigma_{q}(\Psi(\langle v,v\rangle)^{1/2}x)\rangle )h \rangle_{H}
\\&=&  \langle h,\varphi_{q} ( \langle(\pi_{q})_{*}(\tiny{\Psi}(\langle v,v\rangle)^{1/2})(\sigma_{q}(x)),(\pi_{q})_{*}(\Psi(\langle v,v\rangle)^{1/2})(\sigma_{q}(x)) \rangle )h \rangle_{H}
\\&\leq &\tilde{q}(\Psi\langle v,v\rangle) \langle h,\varphi_{q}(\langle\sigma_{q}(x),\sigma_{q}(x)\rangle)h \rangle_{H}
\\&=& \tilde{q}(\Psi\langle v,v\rangle) \langle h,(\varphi_{q} \circ \pi_{q})(\langle x,x\rangle)h \rangle_{H}
\\&=& \tilde{q}(\Psi\langle v,v\rangle) \langle h,\varphi(\langle x,x\rangle)h \rangle_{H}
\\&=&\tilde{q}(\Psi\langle v,v\rangle) \langle x\otimes h,x\otimes h \rangle.
\end{eqnarray*}
The following equalities hold for every $v,v^{'}\in V,$ $x,x^{'}\in E$ and $h,h^{'}\in H$
\begin{eqnarray*}
\langle x\otimes h\  ,\ _{E}^{V}\Phi^{*}(v)\ _{E}^{V}\Phi(v^{'})
(x^{'}\otimes h^{'}) \rangle &=&\langle_{E}^{V}\Phi(v)(x\otimes h)\ ,\ _{E}^{V}\Phi(v)(x^{'}\otimes h^{'})\rangle
\\&=& \langle v\otimes x \otimes h\ ,\ v^{'}\otimes x^{'}\otimes h^{'}\rangle
\\&=& \langle h,\varphi(\langle v\otimes x,v^{'}\otimes x^{'}\rangle)h\rangle_{H}
\\&=& \langle h,\varphi(\langle x,\Psi(\langle v,v^{'}\rangle)x^{'}\rangle)h^{'}\rangle_{H}
\\&=& \langle x\otimes h\ ,\ \Psi(\langle v,v^{'}\rangle)x^{'}\otimes h^{'}\rangle
\\&=& \langle x\otimes h\ ,\ _{E}^{A}\varphi(\langle v,v^{'}\rangle)(x^{'}\otimes h^{'})\rangle,
\end{eqnarray*}
which imply $\langle_{E}^{V}\Phi(v),_{E}^{V}\Phi(v^{'})\rangle=\,
_{E}^{V}\Phi^{*}(v)\ _{E}^{V}\Phi(v^{'})=\, _{E}^{A}\varphi(\langle v,v^{'}\rangle).$
That is, the map $_{E}^{V}\Phi:V \to B(_{E}H  ,\, _{V\otimes_{\Psi} E}H)$ is a
$_{E}^{A}\varphi$-morphism and so it is a representation of $V$. We now show that
$_{E}^{V}\Phi$ is non-degenerate.
To see this, recall that
$\overline{\Psi(A) (E)}=E$ and $\overline{ \langle V,V \rangle }=A$, which imply
$\overline{\Psi(\langle V,V \rangle ) (E)}=E$. Suppose $x,x^{'}\in E$ and $h\in H$, we have
\begin{eqnarray*}
\|(x-x^{'})\otimes h\|^2 &=&\langle h,\varphi(\langle x-x^{'},x-x^{'}\rangle)h\rangle_{H}
\\&\leq &\| h\|^{2}\|\varphi(\langle x-x^{'},x-x^{'}\rangle)\|
\\&\leq &\| h\|^{2}{q}(\langle x-x^{'},x-x^{'}\rangle) = \| h\|^{2}\bar{q}_{E}(x-x^{'}).
\end{eqnarray*}
Given $\epsilon >0$, there exist $v_{i},v^{'}_{i} \in V$ and $x_{i}\in E$ such that
$\bar{q}_E(\sum_{i}\Psi(\langle v_{i},v^{'}_{i}\rangle)x_{i}-x)<\epsilon$. In view of the
above inequality, the term $\sum_{i}\Psi(\langle v_{i},v^{'}_{i}\rangle)x_{i}\otimes h$
approximates $x\otimes h$ in $_{E}H$. But we have
\begin{eqnarray*}
\sum_{i} \Psi(\langle v_{i},v^{'}_{i} \rangle)x_{i} \otimes h &=&
\sum_{i} \ _{E}^{A}\varphi (\langle v_{i},v^{'}_{i} \rangle)(x_{i} \otimes h)\\
&=&\sum_{i} \ _{E}^{V}\Phi^{*}(v_{i}) \ _{E}^{V}\Phi(v^{'}_{i})(x_{i}\otimes h) \\
&=&\sum_{i} \ _{E}^{V}\Phi^{*}(v_{i})(\ v^{'}_{i}\otimes x_{i}\otimes h),
\end{eqnarray*}
which implies $_{E}^{V}\Phi(V)^{*}(_{V\otimes_{\Psi} E}H)= \, _{E}H$.
The equality $_{E}^{V}\Phi(V)(_{E}H)= \,  _{V\otimes_{\Psi} E}H$ follows from the definition
of $_{E}^{V}\Phi$, i.e.,  $_{E}^{V}\Phi$ is non-degenerate.
\end{construction}

\begin{definition}
The representation $_{E}^{V}\Phi$  in Construction \ref{const2} is a called Rieffel induced representation from
$W$ to $V$ via $E$.
\end{definition}

\begin{theorem}\label{Rep6}
Let $V$ and $W$ be two full Hilbert modules over locally C*-algebras $A$ and $B$,
respectively. Let $E$ be a Hilbert
$B$-module, $\Psi:A\to L_{B}(E)$ a non-degenerate continuous $*$-morphism
and $\Phi:W \to B(H,K)$
 a non-degenerate representation. If $q \in S(B)$ and $(\varphi_{q},H)$
is a non-degenerate representation of $B_{q}$
associated to $(\varphi,H)$, then there is $p\in S(A)$ such
that $A_{p}$ acts non-degenerately on $E_{q}$
and the representations $_{E}^{V}\Phi$ and
$_{E_{q}}^{V_{p}}\Phi_{q} \, \circ \,  \sigma_{p}^{V}$ of $V$ are unitarily equivalent.
\end{theorem}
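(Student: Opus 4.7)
The plan is to extract from the continuity of $\Psi$ a seminorm $p\in S(A)$, use it to descend the whole construction to the C*-level, and then implement the unitary equivalence by building two canonical unitaries between the ambient Hilbert spaces and checking intertwining on elementary tensors. \emph{Step 1 (choice of $p$ and non-degeneracy on $E_q$).} Continuity of $\Psi:A\to L_B(E)$ tested against the continuous C*-seminorm $\tilde q$ on $L_B(E)$ furnishes $p\in S(A)$ with $\tilde q(\Psi(a))\leq p(a)$ for all $a\in A$, so that $\pi_p(a)\mapsto(\pi_q)_{*}(\Psi(a))$ descends to a well-defined $*$-morphism $\Psi_{pq}:A_p\to L_{B_q}(E_q)$ characterized by $\Psi_{pq}(\pi_p(a))\sigma_q^{E}(x)=\sigma_q^{E}(\Psi(a)x)$. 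The non-degeneracy $\overline{\Psi(A)E}=E$, transported through the continuous surjection $\sigma_q^{E}$ via this identity, yields $\overline{\Psi_{pq}(A_p)E_q}=E_q$, so $A_p$ acts non-degenerately on $E_q$. Lemma \ref{Rep5} supplies the associated non-degenerate representations $\varphi_q:B_q\to B(H)$ and $\Phi_q:W_q\to B(H,K)$, so Construction \ref{const2} applies at the C*-level to the data $(V_p,W_q,E_q,\Psi_{pq},\Phi_q)$ and produces ${}_{E_q}^{V_p}\Phi_q:V_p\to B({}_{E_q}H,\,{}_{V_p\otimes_{\Psi_{pq}}E_q}H)$.

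\emph{Step 2 (canonical unitaries).} Define on elementary tensors
\begin{align*}
U_0(x\otimes h) &= \sigma_q^{E}(x)\otimes h,\\
U_0'(v\otimes x\otimes h) &= \sigma_p^{V}(v)\otimes\sigma_q^{E}(x)\otimes h.
\end{align*}
Using $\varphi=\varphi_q\circ\pi_q$, the identity $\langle\sigma_q^{E}(x),\sigma_q^{E}(y)\rangle=\pi_q(\langle x,y\rangle)$, and $\sigma_q^{E}(\Psi(a)x)=\Psi_{pq}(\pi_p(a))\sigma_q^{E}(x)$, a one-line inner-product computation shows that $U_0$ and $U_0'$ preserve the sesquilinear forms defining ${}_{E}H$ and ${}_{V\otimes_{\Psi}E}H$ in Construction \ref{const2}. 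Hence they pass to isometries of the pre-Hilbert quotients and extend to isometries $U:{}_{E}H\to{}_{E_q}H$ and $U':{}_{V\otimes_{\Psi}E}H\to{}_{V_p\otimes_{\Psi_{pq}}E_q}H$. Surjectivity of $\sigma_p^{V}$ and $\sigma_q^{E}$, combined with the density of elementary tensors in the target completions, makes the ranges of $U$ and $U'$ dense, so both are unitaries.

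\emph{Step 3 (intertwining).} On an elementary tensor $x\otimes h$ one computes
\[
U'\bigl({}_{E}^{V}\Phi(v)(x\otimes h)\bigr)=U'(v\otimes x\otimes h)=\sigma_p^{V}(v)\otimes\sigma_q^{E}(x)\otimes h={}_{E_q}^{V_p}\Phi_q(\sigma_p^{V}(v))\bigl(U(x\otimes h)\bigr),
\]
and continuity extends this identity to all of ${}_{E}H$, yielding the unitary equivalence between ${}_{E}^{V}\Phi$ and ${}_{E_q}^{V_p}\Phi_q\circ\sigma_p^{V}$. The main technical work is concentrated entirely in Step 1: one must verify simultaneously that $\Psi_{pq}$ is well defined, that its values genuinely lie in $L_{B_q}(E_q)$ via $(\pi_q)_{*}$, and that non-degeneracy descends under $\sigma_q^{E}$. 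Once $\Psi_{pq}$ is in place, Steps 2 and 3 reduce to the same elementary-tensor inner-product computation already carried out in Construction \ref{const2}, so the rest of the argument is essentially formal.
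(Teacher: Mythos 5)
Your proposal is correct and follows essentially the same route as the paper's proof: extract $p$ from continuity of $\Psi$ via $\tilde q(\Psi(a))\le p(a)$, descend $\Psi$ to a non-degenerate $*$-morphism $A_p\to L_{B_q}(E_q)$, and then build the two unitaries $U$ and $U'$ on elementary tensors (the paper's $U_1$ and $U_2$) by the same inner-product-preservation computation, finishing with the intertwining identity. The only cosmetic difference is that the paper additionally cites Joita's Proposition 3.4 for the equivalence of the underlying algebra representations, which your direct construction makes unnecessary.
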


\begin{proof}
Continuity of $\Psi$ implies that there exists $p\in S(A)$ such that $\tilde{q}(\Psi(a))\leq p(a)$ for each $a\in A$,
which guarantees $\Psi_{p}:A_{p}\to L_{B_{q}}(E_{q})$, $\Psi_{p}(\pi_{p}(a))=(\pi_{q})_{*}(\Psi(a))$
is a $*$-morphism of C*-algebras. Moreover, $\Psi_{p}$ is non-degenerate since
\begin{eqnarray*}
\overline{\Psi_{p}(A_{p})(E_{p})}=\overline{\Psi_{p}(\pi_{p}(A))(\sigma_{p}^{E}(E))}&=&\overline{(\pi_{q})_{*}(\Psi(A)\sigma_{q}^{E}(E))}
\\&=&\sigma_{q}^{E} \, \overline{(\Psi(A)(E))}
\\&=& \sigma_{q}^{E}(E) = E_{q}.
\end{eqnarray*}
If $\Phi_{q}$ is a non-degenerate representation of $W_{q}$ associated
to $\Phi$, then $_{E_{q}}^{V_{p}}\Phi_{q}:V_{p}\to B(_{E_{q}}H\, ,\ _{V_{p}\otimes_{\Psi_{q}} E_{q}}H)$ defined by $_{E_{q}}^{V_{p}}\Phi_{q}(\sigma_{p}^{V}(v))(\sigma_{q}^{E}(x)\otimes h)=\sigma_{p}^{V}(v)\otimes\sigma_{q}^{E}(x)\otimes h$
is a non-degenerate representation of $V_{p}$ which is also a $_{E_{q}}^{A_{p}}\varphi_{q}$-morphism.
Indeed, $_{E_{q}}^{V_{p}}\Phi_{q}$ is the Rieffel induced representation from
$W_q$ to $V_p$ via $E_q$. Hence, $_{E_{q}}^{V_{p}}\Phi_{q}\, \circ \, \sigma_{p}^{V}$
is a non-degenerate representation of
$V$ and it is a $_{E_{q}}^{A_{p}}\varphi_{q}\, \circ \, \pi_{p}$-morphism. The representations
$(_{E}^{A}\varphi\ ,\ _{E}H)$ and $(_{E_{q}}^{A_{p}}\varphi_{q}\  \circ \ \pi_{p}\ ,\ _{E_{q}}H)$ of $A$
are unitarily equivalent by ~\cite[proposition 3.4]{mar4}. We define the linear map
$U_{1}: E\otimes_{alg} H \to E_{q}\otimes_{alg} H$,
$U_{1}(x\otimes h)=\sigma_{q}^{E}(x)\otimes h$ which satisfies
\begin{eqnarray*}
\langle U_{1}(x\otimes h),U_{1}(x\otimes h)\rangle &=&\langle\sigma_{q}^{E}(x)\otimes h,\sigma_{q}^{E}(x)\otimes h\rangle
\\&=& \langle h,\varphi_{q}(\langle\sigma_{q}^{E}(x),\sigma_{q}^{E}(x)\rangle)h\rangle_{H}
\\&=& \langle h,\varphi_{q}(\pi_{q}(\langle x,x\rangle))h\rangle_{H}
\\&=& \langle h,\varphi(\langle x,x\rangle)h\rangle_{H}
\\&=& \langle x \otimes h,x \otimes h\rangle,
\end{eqnarray*}
for all $x\in E$ and $h\in H$. Then $U_{1}$ can be extended to a bounded linear operator,
which is again denoted by $U_{1}$ from $_{E}H$ onto $_{E_{q}}H$.
It is easy to see that $U_{1}$ is a unitary operator.
We define the linear map
$U_{2}: V\otimes_{alg} E\otimes_{alg} H \to V_{p}\otimes_{alg} E_{q}\otimes_{alg} H$ by
$U_{2}(v\otimes x\otimes h)=\sigma_{p}^{V}(v)\otimes \sigma_{q}^{E}(x)\otimes h.$
For every $v\in V$, $x\in E$ and $h\in H$ we have
\begin{eqnarray*}
\langle U_{2}(v\otimes x\otimes h),U_{2}(v\otimes x\otimes h) \rangle
&=& \langle\sigma_{p}^{V}(v)\otimes \sigma_{q}^{E}(x)\otimes h,\sigma_{p}^{V}(v)\otimes \sigma_{q}^{E}(x)\otimes h \rangle
\\  &=&  \langle h,\varphi_{q} \Big(\langle\sigma_{p}^{V}(v)\otimes \sigma_{q}^{E}(x),\sigma_{p}^{V}(v)\otimes \sigma_{q}^{E}(x)\rangle \Big)h \rangle_{H}
\\&=&  \langle h,\varphi_{q}\Big(\langle\sigma_{q}^{E}(x),\Psi_{p}(\langle\sigma_{p}^{V}(v),\sigma_{p}^{V}(v)\rangle)\sigma_{q}^{E}(x)\rangle\Big)h \rangle_{H}
\\&=&   \langle h,\varphi_{q}\Big(\langle\sigma_{q}^{E}(x),\Psi_{p}(\pi_{p}(\langle v,v\rangle))\sigma_{q}^{E}(x)\rangle\Big)h \rangle_{H}
\end{eqnarray*}
\begin{eqnarray*}\langle U_{2}(v\otimes x\otimes h),U_{2}(v\otimes x\otimes h) \rangle&=&  \langle h,\varphi_{q}\Big(\langle\sigma_{q}^{E}(x),(\pi_{q})_{*}(\Psi(\langle v,v\rangle))\sigma_{q}^{E}(x)\rangle\Big)h \rangle_{H}\\
&=&  \langle h,\varphi_{q}\Big(\langle\sigma_{q}^{E}(x),\sigma_{q}^{E}(\Psi(\langle v,v\rangle)x)\rangle\Big)h \rangle_{H}
\\&=&  \langle h,\varphi_{q}\Big(\pi_{q}(\langle x,\Psi(\langle v,v\rangle)x)\Big)h \rangle_{H}
\\&=&  \langle h,\varphi(\langle x,\Psi(\langle v,v\rangle)x)h \rangle_{H}
\\&=& \langle v\otimes x\otimes h,v\otimes x\otimes h \rangle,
\end{eqnarray*}
and so $U_{2}$ can be extended
to a bounded linear operator $U_{2}$ from $_{V\otimes_{\Psi}E}H$ onto $_{V_{p}\otimes_{\Psi_{q}} E_{q}}H$.
It is easy to see that $U_{2}$ is unitary.
Moreover, $U_{2}\  _{E}^{V}\Phi(v)=( _{E_{q}}^{V_{p}}\Phi_{q} \circ \sigma_{p}^{V})\ U_{1}(v)$
for all $v\in V$. Hence, the representations
$_{E}^{V}\Phi$ and $_{E_{q}}^{V_{p}}\Phi_{q} \ \circ \sigma_{p}^{V}$
are unitarily equivalent.
\end{proof}

\begin{theorem}\label{Rep7}
Let $\Phi_{1}:W\to B(H_{1},K_{1})$ and $\Phi_{2}:W\to B(H_{2},K_{2})$ be two
non-degenerate representations of $W$. If $\Phi_{1}$ and $\Phi_{2}$ are
unitarily equivalent, then $_{E}^{V}\Phi_{1}$ and $_{E}^{V}\Phi_{2}$ are unitarily equivalent, too.
\end{theorem}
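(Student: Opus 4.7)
The plan is to follow the template of Lemma \ref{Rep2}, adapted to the more elaborate tensor-product prescription of Construction \ref{const2}. I would first extract from the unitary equivalence of $\Phi_1,\Phi_2$ a unitary intertwiner between the underlying representations $\varphi_1,\varphi_2$ of $B$, and then use this single intertwiner to build the two unitaries required on the induced Hilbert spaces.

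Let $U_1:H_1\to H_2$ and $U_2:K_1\to K_2$ be unitaries with $U_2\Phi_1(w)=\Phi_2(w)U_1$ for all $w\in W$. Since $W$ is full (the standing hypothesis of this section) and $\Phi_i(x)^{*}\Phi_i(y)=\varphi_i(\langle x,y\rangle)$, the proof of Lemma \ref{Rep1} carries over verbatim to the locally C*-algebra setting and yields $U_1\varphi_1(b)=\varphi_2(b)U_1$ for every $b\in B$. Writing $U:=U_1$, I would then define linear maps
$T_1^{\circ}:E\otimes_{alg}H_1\to E\otimes_{alg}H_2$ by $x\otimes h\mapsto x\otimes Uh$ and $T_2^{\circ}:V\otimes_{alg}E\otimes_{alg}H_1\to V\otimes_{alg}E\otimes_{alg}H_2$ by $v\otimes x\otimes h\mapsto v\otimes x\otimes Uh$. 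Using the intertwining $U\varphi_1=\varphi_2 U$ and the identity $\langle v\otimes x,v'\otimes x'\rangle=\langle x,\Psi(\langle v,v'\rangle)x'\rangle$ inside $V\otimes_{\Psi}E$, a direct computation (structurally identical to those appearing in Construction \ref{const2}) shows that $T_1^{\circ}$ and $T_2^{\circ}$ preserve the sesquilinear forms. They therefore descend to the quotients by $N_{\varphi_1}$ and its analog for $V\otimes_{\Psi}E$, and extend by continuity to isometric operators $T_1:{}_{E}H_1\to{}_{E}H_2$ and $T_2:{}_{V\otimes_{\Psi}E}H_1\to{}_{V\otimes_{\Psi}E}H_2$. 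Surjectivity of $U$ combined with density of the algebraic tensors in each completion forces $T_1$ and $T_2$ to be unitaries.

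Finally, one verifies that $T_2\,{}_{E}^{V}\Phi_1(v)$ and ${}_{E}^{V}\Phi_2(v)\,T_1$ both send a generator $x\otimes h$ to $v\otimes x\otimes Uh$, so the intertwining $T_2\,{}_{E}^{V}\Phi_1(v)={}_{E}^{V}\Phi_2(v)\,T_1$ holds on a dense subspace and extends by continuity to all of ${}_{E}H_1$, which is exactly the desired unitary equivalence. The main obstacle is really only bookkeeping: one must track the quotients by $N_{\varphi_i}$ inside each completion and check that the prescription $\mathrm{id}\otimes U$ descends through them, which is where the identity $U\varphi_1=\varphi_2 U$ gets used. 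A shorter alternative would be to pick a common seminorm $q\in S(B)$ dominating both $\|\varphi_i(\cdot)\|$, descend to the C*-algebra case via Lemma \ref{Rep5} (noting that unitary equivalence is preserved under the quotient), apply Lemma \ref{Rep3} at the level of $W_q$, and transport the result back via Theorem \ref{Rep6}.
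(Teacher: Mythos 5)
Your proposal is correct, but your main argument follows a genuinely different route from the paper's. The paper proves Theorem~\ref{Rep7} by reduction to the C*-algebra case: it chooses seminorms $q,q'\in S(B)$ for $\varphi_1,\varphi_2$, passes to a common dominating seminorm $r\geq q,q'$, invokes Theorem~\ref{Rep6} to replace each ${}_{E}^{V}\Phi_i$ by the unitarily equivalent ${}_{E_r}^{V_p}\Phi_{i\,r}\circ\sigma_p^V$, and then applies Lemma~\ref{Rep3} to the C*-module representations $\Phi_{1\,r},\Phi_{2\,r}$ of $W_r$ --- this is exactly the ``shorter alternative'' you sketch in your last sentence. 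Your primary argument instead constructs the intertwining unitaries directly: extract $U\varphi_1(b)=\varphi_2(b)U$ from fullness of $W$ as in Lemma~\ref{Rep1}, check that $x\otimes h\mapsto x\otimes Uh$ and $v\otimes x\otimes h\mapsto v\otimes x\otimes Uh$ preserve the sesquilinear forms, and descend/extend to unitaries $T_1,T_2$ satisfying $T_2\,{}_{E}^{V}\Phi_1(v)={}_{E}^{V}\Phi_2(v)\,T_1$ on generators. All the computations you indicate do go through (positivity of the forms and boundedness of ${}_{E}^{V}\Phi_i(v)$ are already supplied by Construction~\ref{const2}), so this is a complete proof. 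What your direct route buys is independence from Theorem~\ref{Rep6} and from the inverse-limit bookkeeping entirely, together with an explicit formula for the intertwiners (it is really Lemma~\ref{Rep2} run twice, once for ${}_{E}H$ and once for ${}_{V\otimes_{\Psi}E}H$); what the paper's route buys is brevity given the infrastructure already built, at the cost of quietly using that unitary equivalence of $\Phi_1,\Phi_2$ descends to $\Phi_{1\,r},\Phi_{2\,r}$ --- a point you correctly flag as needing a (one-line) check.
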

\begin{proof}
Let $q, q' \in S(B)$, $(\varphi_{1 \, q},H_{1})$ be a representation of $B_{q}$ associated
to $\varphi_{1}$ and let $(\varphi_{2 \, q^{'}}, H_{2})$ be a representation of $B_{q^{'}}$
associated to $\varphi_{2}$. Consider $r\in S(B)$ such that $q,q^{'}\leq r$. By
Theorem \ref{Rep6}, there exists $p\in S(A)$ such that $A_{p}$ acts
non-degenerately on $E_{r}$ and the representation $_{E}^{V}\Phi_{i}$ is
unitarily equivalent to $_{E_{r}}^{V_{p}}\Phi_{i \, r} \ \circ \  \sigma_{p}^{V}$
for $i=1,2$. Since $\Phi_{1 \, r}$ and $\Phi_{2 \, r}$ are
unitarily equivalent representations of $W_{r}$, Lemma \ref{Rep3} implies
that the representations $_{E_{r}}^{V_{p}} \Phi_{1 \, r}$ and
$_{E_{r}}^{V_{p}}\Phi_{2 \, r}$ are unitarily equivalent.
\end{proof}
\begin{corollary}\label{Rep8}
If $\Phi:W\to B(H,K)$  and $\oplus_{i\in I}\Phi_{i}:W\to B(\oplus_{i\in I} H_{i},\oplus_{i\in I}K_{i})$
are unitarily equivalent, then $_{E}^{V}\Phi$ and $\oplus_{i\in I} \ _{E}^{V}\Phi_{i}$ are unitarily equivalent.
\end{corollary}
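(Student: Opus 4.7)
The plan is to reduce the statement to a distributivity property of the Rieffel induced representation construction over orthogonal direct sums. By Theorem~\ref{Rep7}, the operation $\Phi \mapsto \, _E^V\Phi$ preserves unitary equivalence of non-degenerate representations of $W$; therefore it suffices to prove that the induced representation of $\oplus_{i\in I}\Phi_i$ is itself unitarily equivalent to $\oplus_{i\in I}\, _E^V\Phi_i$. In other words, I only need to exhibit natural unitaries that commute $\, _E^V(-)$ with $\oplus_{i\in I}$.

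For this, I would construct two candidate unitaries directly from the explicit tensor description in Construction~\ref{const2}. First, define $U_1$ on the algebraic level by
\[
U_1\bigl(x\otimes (h_i)_{i\in I}\bigr) = \bigl(x\otimes h_i\bigr)_{i\in I},
\qquad x\in E,\ (h_i)\in \oplus_{i\in I}H_i,
\]
from $E\otimes_{\text{alg}}(\oplus_{i\in I}H_i)$ into $\oplus_{i\in I}(E\otimes_{\text{alg}}H_i)$, and analogously
\[
U_2\bigl(v\otimes x\otimes (k_i)_{i\in I}\bigr) = \bigl(v\otimes x\otimes k_i\bigr)_{i\in I}
\]
on the space modelling $\, _{V\otimes_\Psi E}(\oplus_{i\in I}K_i)$. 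A direct inner product computation using $\langle x\otimes h,y\otimes k\rangle=\langle h,\varphi(\langle x,y\rangle)k\rangle_H$, together with the fact that $\varphi=\oplus_{i\in I}\varphi_i$ acts diagonally on $\oplus_{i\in I}H_i$, shows that $U_1$ and $U_2$ are inner-product preserving on the algebraic tensor products, and hence descend to isometries on the quotients modulo the null spaces $N_\varphi$.

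Next, I would show the ranges are dense: every elementary tensor $x\otimes h_i$ in the $i$-th summand of $\oplus_{i\in I}\, _EH_i$ lies in the image of the obvious inclusion $E\otimes_{\text{alg}}H_i\hookrightarrow E\otimes_{\text{alg}}(\oplus_{i\in I}H_i)$ (namely $x\otimes h_i^{(i)}$, the element whose $j$-th coordinate is $\delta_{ij}h_i$), and finite sums of such elements are dense in $\oplus_{i\in I}\, _EH_i$. Therefore $U_1$ extends to a unitary from $\, _E(\oplus_{i\in I}H_i)$ onto $\oplus_{i\in I}\, _EH_i$, and similarly $U_2$ is unitary. Finally, reading off the definition $\, _E^V\Phi(v)(x\otimes h)=v\otimes x\otimes h$, one checks on elementary tensors that
\[
U_2\, _E^V\bigl(\oplus_{i\in I}\Phi_i\bigr)(v) \,=\, \Bigl(\oplus_{i\in I}\, _E^V\Phi_i(v)\Bigr)\,U_1,
\qquad v\in V,
\]
which establishes the required unitary equivalence, and combining this with Theorem~\ref{Rep7} completes the proof.

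The only mild obstacle is bookkeeping: one must be careful that the seminorm estimates from Construction~\ref{const2} are uniform over the summands so that $U_1,U_2$ are genuinely bounded on the algebraic tensor product before passing to the completion. This is automatic here because the inner product on $\oplus_{i\in I}H_i$ is the orthogonal sum, and the verification that $U_1$ (respectively $U_2$) is isometric on arbitrary finite sums $\sum_k x_k\otimes (h^{(k)}_i)_{i\in I}$ reduces, by the diagonal action of $\varphi$, to a term-by-term calculation in each $\, _EH_i$ (respectively $\, _{V\otimes_\Psi E}K_i$).
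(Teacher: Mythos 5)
Your proof is correct, but it takes a genuinely different route from the paper's. The paper reduces everything to the C*-level: it fixes $q\in S(B)$, forms the associated representations $\Phi_{q}$ and $\Phi_{i\,q}$ of the Hilbert C*-module $W_{q}$ (checking that each $\Phi_{i\,q}$ is well defined by using the given unitary equivalence), observes that $\Phi_{q}$ is unitarily equivalent to $\oplus_{i}\Phi_{i\,q}$, invokes Theorem~\ref{Rep6} to replace $_{E}^{V}\Phi$ and each $_{E}^{V}\Phi_{i}$ by $_{E_{q}}^{V_{p}}\Phi_{q}\circ\sigma_{p}^{V}$ and $_{E_{q}}^{V_{p}}\Phi_{i\,q}\circ\sigma_{p}^{V}$, and then applies the C*-module version of the statement, Corollary~\ref{Rep4}. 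You instead stay entirely at the locally C*-level: after using Theorem~\ref{Rep7} to replace $_{E}^{V}\Phi$ by $_{E}^{V}(\oplus_{i}\Phi_{i})$, you prove directly that the induction construction distributes over direct sums by exhibiting the canonical unitaries $_{E}(\oplus_{i}H_{i})\cong\oplus_{i}\ _{E}H_{i}$ and $_{V\otimes_{\Psi}E}(\oplus_{i}H_{i})\cong\oplus_{i}\ _{V\otimes_{\Psi}E}H_{i}$ and checking the intertwining relation on elementary tensors. Your argument is more self-contained (it needs neither Corollary~\ref{Rep4} nor the seminorm-by-seminorm bookkeeping with associated representations) and it establishes the cleaner, stronger fact $_{E}^{V}(\oplus_{i}\Phi_{i})\cong\oplus_{i}\ _{E}^{V}\Phi_{i}$; the paper's route is shorter on the page because it recycles Theorem~\ref{Rep6} and the already-established C*-module case, in keeping with the inverse-limit strategy used throughout. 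Two small points to tidy up: your second unitary should be defined on $_{V\otimes_{\Psi}E}(\oplus_{i}H_{i})$ rather than on a space built from $\oplus_{i}K_{i}$, since by Construction~\ref{const2} the target Hilbert space of the induced representation is manufactured from $H$, not from $K$; and you should record that each $\Phi_{i}$ is itself non-degenerate (which follows by projecting the non-degeneracy of $\oplus_{i}\Phi_{i}$ onto the $i$-th summand), so that each $_{E}^{V}\Phi_{i}$ is defined and Theorem~\ref{Rep7} is applicable to the pair $\Phi$, $\oplus_{i}\Phi_{i}$.
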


\begin{proof}
Let $q \in S(B)$ and $\Phi_{q}:W_{q}\to B(H,K)$ be a representation
of $W_{q}$ associated to $\Phi$.
For every $i\in I$, define $\Phi_{i \, q}:W_{q}\to B(H_i,K_i)$ by
$\Phi_{i \, q}(\sigma_{q}^{W}(w))=\Phi_{i}(w).$ If $\sigma_{q}^{W}(w)=0$, then
$\Phi_{q}( \sigma_{q}^{W}( w) )=0$ and so $\Phi(w)=0.$ Since $\Phi$ and $\oplus_{i\in I} \Phi_{i}$
are unitarily equivalent, we conclude that $\oplus_{i \in I}\Phi_{i}(w)=0$
and therefore, $\Phi_{i}(w)=0$ for each $i\in I$. It proves
that $\Phi_{i \, q}$ is well-defined for any $i\in I$. It is easy
to see that $\Phi_{q}$ is unitarily equivalent to $\oplus_{i\in I}\Phi_{i \, q}$.
By Theorem \ref{Rep6}, there exists $p\in S(A)$ such that $A_{p}$
acts non-degenerately on $E_{q}$ and the representations $_{E}^{V}\Phi$
and $_{E_{q}}^{V_{p}}\Phi_{q} \ \circ \  \sigma_{p}^{V}$ of $V$ are
unitarily equivalent. The representations $_{E}^{V}\Phi_{i}$
and $_{E_{q}}^{V_{p}}\Phi_{i \, q} \ \circ \  \sigma_{p}^{V}$, $i\in I$
are unitarily equivalent, too.
On the other hand, Corollary \ref{Rep4} implies that the
representations $_{E_{q}}^{V_{p}}\Phi_{q}$ and
$\oplus_{i\in I} \ _{E_{q}}^{V_{p}}\Phi_{i \, q}$ of $V_{p}$ are unitarily
equivalent. Consequently, the representations
$_{E_{q}}^{V_{p}}\Phi_{q} \ \circ \  \sigma_{p}^{V}$ and
$\oplus_{i\in I}( _{E_{q}}^{V_{p}}\Phi_{i \, q} \ \circ \  \sigma_{p}^{V})$ of $V$
are unitarily equivalent.
\end{proof}

%The
%reader is encouraged to study the publications \cite{F-S, FS2, KK,
%PAL1} for more detailed information about unbounded operators on
%Hilbert C*-modules.
\section{The imprimitivity theorem for Hilbert modules}
In this section, we introduce the concept of Morita equivalence
between Hilbert modules over locally C*-algebras
and give a module version of the imprimitivity theorem.

Let $A$ and $B$ be locally C*-algebras. We say that $A$ and $B$ are {\it strongly
Morita equivalent}, written $A\sim_{M} B$, if there is a full Hilbert $A$ module $E$
such that locally C*-algebras $B$ and $K_{A}(E)$ are isomorphic. Joita
\cite[Proposition 4.4]{mar5} showed that
strong Morita equivalence is an equivalence relation in the set of all
locally C*-algebras. The vector space
$\tilde{E}:=K_{A}(E,A)$ is a full Hilbert $K_{A}(E)$-module
with the following action and inner product
\begin{eqnarray*}
(T,S) & \to  TS,& ~S\in K_{A}(E), ~T\in K_{A}(E,A),\\
\langle T,S\rangle & = T^{*}S,& ~T,S \in K_{A}(E,A).
\end{eqnarray*}
Since locally C*-algebras $B$ and $K_{A}(E)$ are
isomorphic, $\tilde{E}$ may be regarded as a Hilbert $B$-module.
Moreover, the linear map $\alpha$ from $A$ to $K_{B}(\tilde{E})$
defined by $\alpha(a)(\theta_{b,x})=\theta_{ab,x}$ is an
isomorphism of locally C*-algebras by \cite[Lemma 4.2~and~Remark 4.3]{mar5}.
It is easy to see that for each $p\in S(A)$, the linear
map $U_{p}:(\tilde{E})_{p}\to \tilde{E_{p}}$ defined by $U_{p}(T+N_{p}^{\tilde{E}})=(\pi_{p})_{*}(T)$
is unitary and so the Hilbert $K_{A_{p}}(E_{p})$-modules
$(\tilde{E})_{p}$ and $\tilde{E_{p}}$ are the same.

\begin{definition}
Suppose $V$ and $W$ are Hilbert modules over locally C*-algebras $A$ and $B$, respectively.
The Hilbert modules $V$ and $W$ are called  Morita equivalent if $K_{A}(V)$ and $K_{B}(W)$
are strong Morita equivalent as locally C*-algebras. In this case, we write $V\sim_{M}W$.
\end{definition}

\begin{lemma}\label{Rep9}
Let $V$ be a full Hilbert module over locally C*-algebra $A$.
Then $K_{A}(V)$ is strong Morita equivalent to $\overline{ \langle V,V \rangle }$.
\end{lemma}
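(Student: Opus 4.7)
The plan is to invoke the definition of strong Morita equivalence essentially tautologically. Since $V$ is full, $\langle V,V\rangle=A$, so $\overline{\langle V,V\rangle}=A$ and the assertion reduces to $K_{A}(V)\sim_{M}A$. By the definition recalled at the start of Section~4, to prove $A\sim_{M}K_{A}(V)$ it suffices to exhibit a full Hilbert $A$-module $E$ such that $K_{A}(V)$ is isomorphic to $K_{A}(E)$ as locally C*-algebras. Taking $E=V$ works immediately: $V$ is full by hypothesis, and the identity supplies the required isomorphism $K_{A}(V)\cong K_{A}(V)$. Invoking that $\sim_{M}$ is an equivalence relation, in particular symmetric (Joita \cite[Proposition 4.4]{mar5}), then yields $K_{A}(V)\sim_{M}A$.

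If one prefers to produce the witnessing module by hand rather than appealing to symmetry, the construction reviewed in the paragraph immediately above the lemma does exactly that. Set $\tilde V=K_{A}(V,A)$, equipped with the $K_{A}(V)$-module action $(T,S)\mapsto TS$ for $S\in K_{A}(V)$, $T\in K_{A}(V,A)$, and the $K_{A}(V)$-valued inner product $\langle T,S\rangle=T^{*}S$. The preamble to the lemma asserts that $\tilde V$ is a full Hilbert $K_{A}(V)$-module and that the map $\alpha:A\to K_{K_{A}(V)}(\tilde V)$, $\alpha(a)(\theta_{b,x})=\theta_{ab,x}$, is an isomorphism of locally C*-algebras. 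This directly realizes $\tilde V$ as the full Hilbert $K_{A}(V)$-module witnessing $K_{A}(V)\sim_{M}A$.

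No serious obstacle arises; the lemma is a direct application of the formalism just established. The only point of care is to notice that the fullness hypothesis is precisely what is needed to identify $\overline{\langle V,V\rangle}$ with $A$, which in turn licenses the use of $V$ (respectively, of $\tilde V$) as the witnessing module. The phrasing in terms of $\overline{\langle V,V\rangle}$ rather than $A$ appears to be cosmetic under fullness, but it presumably anticipates the routine extension to an arbitrary Hilbert $A$-module, obtained by first regarding $V$ as a full Hilbert module over $\overline{\langle V,V\rangle}$ and then applying the full case.
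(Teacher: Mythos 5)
Your proposal is correct. The second route you describe (taking $\tilde V=K_{A}(V,A)$ as a full Hilbert $K_{A}(V)$-module and identifying $A\cong K_{K_{A}(V)}(\tilde V)$ via the results of Joita cited in the preamble) is exactly the paper's proof, which phrases the identification as $K_{K_{A}(V)}(\tilde V)\cong K_{A}(A)\cong A=\overline{\langle V,V\rangle}$ using \cite[Lemma 4.2 and Corollary 3.3]{mar5}. Your first route --- reading off $A\sim_{M}K_{A}(V)$ directly from the definition with witness $E=V$ and then invoking the symmetry of $\sim_{M}$ from \cite[Proposition 4.4]{mar5} --- is an even shorter valid argument; it buys brevity at the cost of outsourcing the construction of the inverse equivalence bimodule to the cited symmetry result, which is precisely what the explicit $\tilde V$ construction carries out by hand.
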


\begin{proof}
The module $\tilde{V} = K_{A}(V,A)$ is a full Hilbert $K_{A}(V)$-module by \cite[Corollary 3.3]{mar5}.
Then locally C*-algebras $K_{K_{A}(V)}(\tilde{V})$
and $K_{A}(A)$ are isomorphic by Lemma 4.2 in  \cite{mar5}.
Since $\overline{ \langle V,V \rangle }=A \simeq K_{A}(A)$, locally C*-algebras $K_{A}(V)$ and
$\overline{ \langle V,V \rangle }$ are strong Morita equivalent.
\end{proof}
\begin{corollary}\label{Rep10}
Two full Hilbert modules over locally C*-algebras are Morita
equivalent if and only if their underlying locally C*-algebras
are strong Morita equivalent.
\end{corollary}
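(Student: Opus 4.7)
The plan is to reduce the corollary to Lemma \ref{Rep9} together with the fact, cited from \cite[Proposition 4.4]{mar5}, that strong Morita equivalence is an equivalence relation on the class of locally C*-algebras (in particular, it is transitive and symmetric).

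First I would record the two applications of Lemma \ref{Rep9}. If $V$ is a full Hilbert $A$-module, then $\overline{\langle V,V\rangle}=A$, so the lemma gives $K_{A}(V)\sim_{M} A$. Symmetrically, if $W$ is a full Hilbert $B$-module, then $K_{B}(W)\sim_{M} B$. These two equivalences are the whole engine of the argument.

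Next I would handle the two implications in turn. For the forward direction, assume $V\sim_{M} W$, which by definition means $K_{A}(V)\sim_{M} K_{B}(W)$. Chaining this with $A\sim_{M} K_{A}(V)$ on the left and $K_{B}(W)\sim_{M} B$ on the right, and invoking transitivity, yields $A\sim_{M} B$. For the reverse direction, assume $A\sim_{M} B$; chaining it with $K_{A}(V)\sim_{M} A$ and $B\sim_{M} K_{B}(W)$ gives $K_{A}(V)\sim_{M} K_{B}(W)$, which is precisely $V\sim_{M} W$ by definition.

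There is no substantive obstacle here: the content of the corollary has already been absorbed by Lemma \ref{Rep9}, and the only remaining ingredient is the equivalence-relation property of $\sim_{M}$ which is invoked from the literature. The proof is therefore a short diagram-chase of Morita equivalences, and the only care needed is in writing down the three-term chain in the right order for each direction.
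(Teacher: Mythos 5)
Your proof is correct and is exactly the argument the paper intends: the corollary is stated without proof as an immediate consequence of Lemma \ref{Rep9} (which gives $K_{A}(V)\sim_{M}\overline{\langle V,V\rangle}=A$ for a full module) combined with the fact, already cited from \cite[Proposition 4.4]{mar5}, that strong Morita equivalence is an equivalence relation. Your three-term chains in each direction spell out precisely this reduction.
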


\begin{theorem}\label{Rep11}
Let $V$ and $W$ be two full Hilbert modules over locally C*-algebras $A$ and $B$,
respectively, such that $V\sim_{M}W$. If $E$ is a Hilbert $A$-module which gives
the strong Morita equivalence between $A$ and $B$, and $\Phi$ is a non-degenerate representation
of $V$, then $\Phi$ is unitarily equivalent to $_{\tilde{E}}^{V}(_{E}^{W}\Phi)$.
\end{theorem}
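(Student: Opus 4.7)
The plan is to reduce the theorem to the classical imprimitivity theorem for Morita equivalent locally C*-algebras, applied to the $A$-representation $\varphi$ associated to $\Phi$, and then lift the resulting base-algebra unitary equivalence to the Hilbert module level via non-degeneracy. First, I would unpack the two inductions. Since $E$ implements the Morita equivalence between $A$ and $B$, the isomorphism $B \cong K_A(E)$ provides a non-degenerate $*$-morphism $\kappa: B \to L_A(E)$, and dually the isomorphism $\alpha: A \to K_B(\tilde E)$ from Section~4 makes $\tilde E$ into a Hilbert $B$-module on which $A$ acts non-degenerately. Construction \ref{const2} therefore applies twice: first with bimodule $E$ to produce a non-degenerate representation ${}_E^W\Phi: W \to B\bigl({}_E H,\ {}_{W \otimes_\kappa E} H\bigr)$ of $W$, and then with bimodule $\tilde E$ to produce a non-degenerate representation ${}_{\tilde E}^V({}_E^W \Phi): V \to B\bigl({}_{\tilde E}({}_E H),\ {}_{V \otimes_\alpha \tilde E}({}_E H)\bigr)$ of $V$. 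A direct inspection of Construction \ref{const2} on elementary tensors shows that the $A$-representation associated to ${}_{\tilde E}^V({}_E^W \Phi)$ is precisely ${}_{\tilde E}^A({}_E^B \varphi)$, the double Rieffel induction of $\varphi$.

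Next I would invoke the imprimitivity theorem for Morita equivalent locally C*-algebras (due to Joita, cf.~\cite{mar4}, and reducible to Rieffel's classical C*-algebra result via an inverse-limit argument in the spirit of Theorem \ref{Rep6}) to obtain a unitary $U_1: H \to {}_{\tilde E}({}_E H)$ intertwining $\varphi$ with ${}_{\tilde E}^A({}_E^B \varphi)$. The companion unitary $U_2: K \to {}_{V \otimes_\alpha \tilde E}({}_E H)$ is then defined on the dense subspace spanned by $\Phi(V)H \subseteq K$ by
\[
U_2\bigl(\Phi(v)\, h\bigr) := {}_{\tilde E}^V({}_E^W \Phi)(v)\,U_1 h, \qquad v \in V,\ h \in H,
\]
extended linearly. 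Well-definedness and isometry follow from the identity
\begin{align*}
\Bigl\| \sum_i \Phi(v_i)\, h_i \Bigr\|^2
&= \sum_{i,j} \langle h_i,\ \varphi(\langle v_i, v_j \rangle)\, h_j \rangle \\
&= \sum_{i,j} \langle U_1 h_i,\ {}_{\tilde E}^A({}_E^B \varphi)(\langle v_i, v_j \rangle)\, U_1 h_j \rangle \\
&= \Bigl\| \sum_i {}_{\tilde E}^V({}_E^W \Phi)(v_i)\, U_1 h_i \Bigr\|^2,
\end{align*}
which uses the $\varphi$-morphism property of $\Phi$, the intertwining property of $U_1$, and the ${}_{\tilde E}^A({}_E^B \varphi)$-morphism property of ${}_{\tilde E}^V({}_E^W \Phi)$ in concert. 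Density of $\Phi(V)H$ in $K$ (non-degeneracy of $\Phi$) together with density of the image of ${}_{\tilde E}^V({}_E^W \Phi)$ in ${}_{V \otimes_\alpha \tilde E}({}_E H)$ (non-degeneracy established in Construction \ref{const2}) then force $U_2$ to extend uniquely to a unitary; the intertwining relation $U_2\, \Phi(v) = {}_{\tilde E}^V({}_E^W \Phi)(v)\, U_1$ holds by construction, completing the proof.

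The main obstacle I expect is two-fold: first, locating or deriving the base-algebra imprimitivity unitary $U_1$ in the locally C*-algebra setting, which should follow from Rieffel's C*-algebra imprimitivity theorem combined with the inverse-limit machinery of Theorem \ref{Rep6}; second, establishing the joint well-definedness and isometry of $U_2$, where the $\varphi$-morphism property of $\Phi$ and the intertwining of $U_1$ must conspire to reproduce the inner-product calculation on the induced side. The extension of $U_2$ by density and the final verification of the intertwining relation on all of $V$ are routine and mirror the pattern already used in Lemma \ref{Rep1}.
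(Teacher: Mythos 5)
Your proposal is correct, and it rests on the same underlying strategy as the paper's proof --- reduce the module-level statement to the imprimitivity theorem for the base algebras $A$ and $B$, and then lift the resulting unitary equivalence of $\varphi$ with its double induction back up to the Hilbert modules using non-degeneracy --- but the execution differs in two respects. First, where you invoke a locally C*-algebra imprimitivity theorem directly (deferring its proof to \cite{mar4} or to an inverse-limit argument), the paper carries out that reduction explicitly: it passes to the quotients $V_{p}$, $A_{p}$, $B_{q}$, $E_{p}$ via \cite[Lemma 4.1]{mar4}, applies the classical C*-algebra imprimitivity theorem \cite[Theorem 3.29]{rae} there, and then reassembles the locally convex picture using Theorems \ref{Rep6} and \ref{Rep7} together with the identity $({}_{E_{p}}^{W_{q}}\Phi_{p}\circ\sigma_{q}^{W})_{q}={}_{E_{p}}^{W_{q}}\Phi_{p}$; so the step you treat as a black box is precisely where the paper does its work. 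Second, where you build the companion unitary $U_{2}$ by hand on $\Phi(V)H$ and verify well-definedness, isometry and surjectivity from the $\varphi$-morphism property and non-degeneracy, the paper obtains the same lift by citing Lemma \ref{Rep2} (unitarily equivalent algebra representations induce unitarily equivalent module representations); your explicit computation is essentially a re-proof of that lemma in situ, and it is correct --- the chain of equalities for $\|\sum_{i}\Phi(v_{i})h_{i}\|^{2}$ does exactly what is needed, and surjectivity of $U_{2}$ follows since $U_{1}H={}_{\tilde{E}}({}_{E}H)$ and ${}_{\tilde{E}}^{V}({}_{E}^{W}\Phi)$ is non-degenerate. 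What your route buys is a cleaner, coordinate-free argument that never mentions the seminorm quotients; what the paper's route buys is self-containedness, since it only ever appeals to the C*-algebra imprimitivity theorem, which is unambiguously in the literature.
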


\begin{proof}
Let $p\in S(A)$ and $\Phi_{p}$ be a non-degenerate representation of $V_{p}$ associated to $\Phi$.
Using \cite[Lemma 4.1]{mar4}, there is $q\in S(B)$ such that $A_{p}\sim_{M}B_{q}$ and
$E_{p}$ gives the strong Morita equivalent between $A_{p}$ and $B_{q}$.
The representations $\varphi_{p}$
and $_{\tilde{E}_{p}}^{A_{p}}(_{E_{p}}^{B_{q}}\varphi_{p})$ of $A_{p}$
are unitarily equivalent by \cite[Theorem 3.29]{rae}. Then the representations $\Phi_{p}$
and $_{\tilde{E}_{p}}^{V_{p}}(_{E_{p}}^{W_{q}}\Phi_{p})$ of $V_{p}$
are unitarily equivalent by Lemma \ref{Rep2} and consequently, the
representations $_{\tilde{E}_{p}}^{V_{p}}(_{E_{p}}^{W_{q}}\Phi_{p})\ \circ \ \sigma_{p}^{V}$
and $\Phi_{p}\ \circ \ \sigma_{p}^{V}=\Phi$ of $V$ are unitarily equivalent.
In view of Theorems \ref{Rep6} and \ref{Rep7}, we have
\begin{itemize}
  \item  the representations
$_{E}^{W}\Phi$ and $_{E_{p}}^{W_{q}}\Phi_{p}\ \circ \ \sigma_{q}^{W}$ of $W$
are unitarily equivalent,
  \item the representations $_{\tilde{E}}^{V}(_{E}^{W}\Phi)$
and $_{\tilde{E}}^{V}(_{E_{p}}^{W_{q}}\Phi_{p}\ \circ \ \sigma_{q}^{W})$ of $V$ are unitarily equivalent, and
  \item the representations $_{\tilde{E}}^{V}(_{E_{p}}^{W_{q}}\Phi_{p}\ \circ \ \sigma_{q}^{W})$
and $ _{\tilde{E_{P}}}^{V_{p}}( _{E_{p}}^{W_{q}}\Phi_{p}\ \circ \ \sigma_{q}^{W})_{q}\ \circ \ \sigma_{p}^{V}$
of $V$ are unitarily equivalent.
\end{itemize}
The assertion now follows from the fact that
$( _{E_{p}}^{W_{q}}\Phi_{p}\ \circ \ \sigma_{q}^{W})_{q}=\ _{E_{p}}^{W_{q}}\Phi_{p}$.
\end{proof}
We now reformulate the imprimitivity theorem within the framework of Hilbert modules as follows.
\begin{theorem}\label{Rep12}
Let $V$ and $W$ be two Hilbert modules over locally
C*-algebras $A$ and $B$, respectively. If $V\sim_{M}W$, then there is a bijective
correspondence between equivalence classes of non-degenerate representations of $V$ and $W$.
\end{theorem}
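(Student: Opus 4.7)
The plan is to construct explicit, mutually inverse bijections between unitary equivalence classes of non-degenerate representations of $V$ and those of $W$, both implemented by Rieffel induction through a suitable bimodule. First I would reduce to the full case: since $V$ is automatically full as a Hilbert module over $\overline{\langle V,V\rangle}$ (and similarly for $W$), and since $K_{A}(V)$ and $K_{B}(W)$ are unchanged by this passage, both the hypothesis $V\sim_{M}W$ and the notion of non-degenerate representation descend to the full setting. So I may assume $V$ and $W$ are full. Corollary \ref{Rep10} then gives $A\sim_{M}B$, and I fix a full Hilbert $A$-module $E$ implementing the equivalence, so that $K_{A}(E)\cong B$. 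Its dual $\tilde{E}=K_{A}(E,A)$ is a full Hilbert $B$-module, and the isomorphism $\alpha:A\to K_{B}(\tilde{E})\hookrightarrow L_{B}(\tilde{E})$ recalled just before Lemma \ref{Rep9} turns $\tilde{E}$ into a Morita bimodule in the reverse direction.

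With this data fixed, I would assign to each non-degenerate representation $\Phi$ of $V$ the induced representation $_{E}^{W}\Phi$ of $W$ from Construction \ref{const2}, using $E$ as a Hilbert $A$-module with $B\cong K_{A}(E)\hookrightarrow L_{A}(E)$ as the required $*$-morphism, and dually to each non-degenerate representation $\Psi$ of $W$ the induced representation $_{\tilde{E}}^{V}\Psi$ of $V$ obtained from the same construction applied to $\tilde{E}$ with the $A$-action $\alpha$. Construction \ref{const2} ensures that both $_{E}^{W}\Phi$ and $_{\tilde{E}}^{V}\Psi$ are non-degenerate, and Theorem \ref{Rep7} guarantees that the assignments $[\Phi]\mapsto[\,_{E}^{W}\Phi\,]$ and $[\Psi]\mapsto[\,_{\tilde{E}}^{V}\Psi\,]$ are well defined on unitary equivalence classes.

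It remains to verify that these two assignments are mutually inverse. In one direction this is exactly Theorem \ref{Rep11}: for every non-degenerate representation $\Phi$ of $V$, the double induction $_{\tilde{E}}^{V}(_{E}^{W}\Phi)$ is unitarily equivalent to $\Phi$. The reverse identity, $_{E}^{W}(_{\tilde{E}}^{V}\Psi)\sim\Psi$ for every non-degenerate representation $\Psi$ of $W$, follows by the same theorem applied with the roles of $(A,V,E)$ and $(B,W,\tilde{E})$ interchanged, since $\tilde{E}$ implements the equivalence $B\sim_{M}A$ via the isomorphism $K_{B}(\tilde{E})\cong A$. The step that I expect to demand the most care is the canonical identification of the double dual $K_{B}(\tilde{E},B)$ with $E$ as a Hilbert $A$-module carrying the correct $B$-action; in the C*-algebra setting this is classical, and in the locally C*-algebra case it should follow from the level-wise identification at each $p\in S(A)$ combined with the inverse-limit description of Hilbert modules and the compatibility provided by Theorem \ref{Rep6}. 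Once this identification is in place, Theorem \ref{Rep11} applied in the reversed roles yields $_{E}^{W}(_{\tilde{E}}^{V}\Psi)\sim\Psi$, so the two assignments are inverse bijections and the theorem follows.
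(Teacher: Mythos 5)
Your proof follows essentially the same route as the paper's: reduce to the full case, induce via the equivalence bimodule $E$, and invoke Theorem \ref{Rep7} for well-definedness on equivalence classes and Theorem \ref{Rep11} for invertibility. You are in fact more explicit than the paper about the inverse assignment $\Psi\mapsto{}_{\tilde{E}}^{V}\Psi$ and the double-dual identification $\tilde{\tilde{E}}\cong E$ needed for the composition $_{E}^{W}({}_{\tilde{E}}^{V}\Psi)\sim\Psi$, a point the paper's one-line argument leaves implicit.
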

\begin{proof}
By replacing the underlying C*-algebras $A$ and $B$, we may
assume that $V$ and $W$ are full Hilbert modules over $A$ and $B$, respectively.
Let $E$ be a Hilbert $A$-module which gives  strong Morita equivalence
between $A$ and $B$. Then, by Theorems \ref{Rep7} and \ref{Rep11},
the map $\Phi \mapsto \, _{E}^{W}\Phi$
from the set of all non-degenerate representations of $V$  to
the set of all non-degenerate representations of $W$ induces a bijective
correspondence between equivalence classes of non-degenerate representations of $V$ and $W$.
\end{proof}

{\bf Acknowledgement}: The authors would like to thank
the referee for his/her careful reading and useful comments.


\begin{thebibliography}{99}
\bibitem{Abbaspour} Gh. Abbaspour Tabadkan and S. Farhangi, Induced representations of
Hilbert C*-modules, arXiv:1403.2256 [math.OA], 2014.

\bibitem{Ara} P. Ara, Morita equivalence and Pedersen ideals,
{\it Proc. Amer. Math. Soc.} {\bf 129} (2001), 1041-1049 .

\bibitem{aram} Lj. Aramba\v{s}i\'c, Irreducible representations of Hilbert C*-modules,
{\it  Math. Proc. R. Ir. Acad.}  {\bf 105A} (2005),  11–-24.

\bibitem{bee} W. Beer, On Morita equivalence of nuclear C*-algebras,
{\it J. Pure Appl. Algebra}, {\bf 26} (1982), 249--267.

\bibitem{fra} M. Fragoulopoulou, {\it Topological algebras with involution},
 North Holland, Amsterdam, 2005.

\bibitem{ino} A. Inoue, Locally C*-algebras,
{\it Mem. Faculty Sci. Kyushu Univ. Ser. A} {\bf 25}
(1971), 197--235.

\bibitem{mar00} M. Joita, On Hilbert modules over locally C*-algebras,
{ \it An. Univ. Bucuresti, Mat.} { \bf 49} (2000), 41-51.

%\bibitem{mar0} M. Joita, On Hilbert modules over locally C*-algebras II,
%{ \it Period. Math. Hungar.}, { \bf 51} (2005), 27-36.

\bibitem{mar1} M. Joita, {\it Hilbert modules over locally
C*-algebras,} University of Bucharest Press, 2006.

%\bibitem{mar2} M. Joita, On bounded module maps between Hilbert modules over locally C*-algebras,
%{\it Acta Math. Univ. Comenianae (N.S.)} {\bf 74}  (2005), 71--78.

\bibitem{mar3} M. Joita, Tensor products of Hilbert modules over locally C*-algebras,
{\it Czech. Math. J.}, {\bf 54} (129)  (2004), no. 3, 727--737.

\bibitem{mar5} M. Joita, Morita equivalence for locally C*-algebras,
{\it Bull. London Math. Soc.}, {\bf 36} (2004), no. 6, 802--810.

\bibitem{mar4} M. Joita, Induced representations of locally C*-algebras,
{\it Rocky Mountain J. Math.}, {\bf 35} (2005), no. 6, 1923--1934.

\bibitem{ms} M. Joita and M. S. Moslehian, A Morita equivalence for Hilbert C*-modules,
{\it Stud. Math.} {\bf 209}  (2012), 11--19.

%\bibitem{JoitaZarak} M. Joita and I. Zarakas, A construction of pro-C*-algebras from pro-C*-correspondence,
%to appear in {\it J. Operator Theory},  arXiv:1410.8735 [math.OA], 2014.

\bibitem{mur} G. J. Murphy, Positive definite kernels and Hilbert C*-modules,
{\it Proc. Edinburgh Math. Soc.} {\bf 40} (1997), 367-–374.

\bibitem{phil1} N. C. Phillips, Inverse limit of C*-algebras,
{\it J. Operator Theory} {\bf 19} (1988), 159--195.

%\bibitem {Phillips88lon} N. C. Phillips, Inverse limits of C*-algebras and applications,
%{\it Operator algebras and applications, Vol. 1}, 127–-185, LMS Lecture Note Series 135,
%Cambridge Univ. Press, 1988.

\bibitem{rae} I. Raeburn and D. P. Williams,
{\it Morita equivalence and continuous-trace C*-algebras},
Mathematical Surveys and Monographs, 60. American Mathematical Society, Providence, RI, 1998.

\bibitem{rif1} M. A. Rieffel, Induced representations of C*-algebras,
{\it Advanced in Math.} {\bf 13} (1974), 176--257.

\bibitem{rif2} M. A. Rieffel, Morita equivalence for C*-algebras and W*-algebras,
{\it J. Pure Appl. Alg.} {\bf 5} (1974), 51--96.

\bibitem{ske} M. Skeide, Unit vectors, Morita equivalence and endomorphisms,
{\it Publ. Res. Inst. Math. Sci.} {\bf 45} (2009), 475-518.

\bibitem{ske1} M. Skeide, Generalised matrix C*-algebras and representations of Hilbert modules,
{\it  Math. Proc. R. Ir. Acad.}, {\bf 100}A (2000), 11--38.

\bibitem {SHAMal} K. Sharifi,
Generic properties of module maps and characterizing inverse limits of C*-algebras of compact operators,
{\it Bull. Malays. Math. Sci. Soc.} {\bf 36} (2013), 481–-489.

\bibitem{zet} H. Zettl, Strong Morita equivalence of C*-algebras preserves nuclearity,
{\it Arch. Math,} {\bf 38} (1982), 448--452.

\end{thebibliography}
\end{document}